\documentclass[12pt]{amsart}
\usepackage{amssymb}
\usepackage{amsmath, amscd}
\usepackage{amsthm}
\usepackage{amsmath}
\usepackage{comment}
\usepackage[table,xcdraw]{xcolor}
\usepackage{ulem}
\usepackage{tikz}
\usepackage{float}
\usepackage{graphicx}
\usepackage{circuitikz}
\usetikzlibrary{positioning}
\usepackage[colorlinks=true, linkcolor=blue,urlcolor=blue]{hyperref}
\usepackage{nicematrix}

\newtheorem{theorem}{Theorem}[section]

\newtheorem{proposition}[theorem]{Proposition}
\newtheorem{lemma}[theorem]{Lemma}

\newtheorem{corollary}[theorem]{Corollary}
\theoremstyle{definition}

\newtheorem{example}[theorem]{Example}
\newtheorem{definition}[theorem]{Definition}

\newtheorem{remark}[theorem]{Remark}

\newcommand{\A}{\mathcal{A}}

\def\a{\alpha}

\date{\today}

\begin{document}
	
\author[A. Moussavi]{Ahmad Moussavi}
\address{Department of Mathematics, Tarbiat Modares University, 14115-111 Tehran Jalal AleAhmad Nasr, Iran}
\email{moussavi.a@modares.ac.ir; moussavi.a@gmail.com}

\author[P. Danchev]{Peter Danchev}
\address{Institute of Mathematics and Informatics, Bulgarian Academy of Sciences, 1113 Sofia, Bulgaria}
\email{danchev@math.bas.bg; pvdanchev@yahoo.com}

\author[A. Javan]{Arash Javan}
\address{Department of Mathematics, Tarbiat Modares University, 14115-111 Tehran Jalal AleAhmad Nasr, Iran}
\email{a.darajavan@modares.ac.ir; a.darajavan@gmail.com}

\author[O. Hasanzadeh]{Omid Hasanzadeh}
\address{Department of Mathematics, Tarbiat Modares University, 14115-111 Tehran Jalal AleAhmad Nasr, Iran}
\email{o.hasanzade@modares.ac.ir; hasanzadeomiid@gmail.com}

\title[generalized $\sqrt{J}$-fine rings]{Expanding generalized fine rings}
\keywords{Fine rings, $\sqrt{J}$-fine rings, Generalized $\sqrt{J}$-fine rings, $\sqrt{J(R)}$, Matrix rings, Group rings}
\subjclass[2010]{16S34, 16U60, 20C07}

\maketitle

\begin{abstract}
We introduce and study the so-called {\it generalized $\sqrt{J}$-fine rings}, where every element outside the Jacobson radical is the sum of a unit and an element from the set $\sqrt{J(R)} := \{ x \in R : x^{n} \in J(R) \text{ for some } n \ge 1 \}$. This commonly extends the notions of {\it fine} and {\it generalized fine rings} defined, respectively, by C\u{a}lug\u{a}reanu-Lam (J. Algebra \& Appl., 2016) and Zhou (J. Algebra \& Appl., 2022).

Specifically, we prove that this class is closed under full matrix rings of any size, as well as we completely characterize when group rings over locally finite groups are generalized $\sqrt{J}$-fine. We also show that every such ring is 2-clean, thus properly placing it between generalized fine rings and 2-clean rings. Several examples are also provided to illustrate the complicated behavior of the introduced concept and its numerous boundaries.
\end{abstract}

\section{Introduction and Principal Tools}

In ring theory, one of the most natural questions we can ask is the following: \textit{How do the addition and multiplication operations of a ring interact with each other}? A beautiful way to study this interaction is to see if every element of a ring can be broken down into simpler, well-understood pieces of other elements.

Throughout this paper, \( R \) denotes an associative ring with identity which is \textit{not} necessarily commutative. We employ the standard notations: \( U(R) \) for the group of units, \( Nil(R) \) for the set of nilpotent elements, \( C(R) \) for the center, \( Id(R) \) for the set of idempotents, and \( J(R) \) for the Jacobson radical. For any positive integer \( n \), the ring consisting of all \( n \times n \) matrices over \( R \) is denoted by \( M_{n}(R) \).

Historically, Nicholson introduced the notion of \textit{clean rings} in \cite{nic} as follows: A ring is called \textit{clean} if every element is a sum of a unit and an idempotent. Over the past three decades, these rings have drawn a considerable interest, and their various generalizations and related variants have been emerged.

Later on, Diesl introduced the concept of \textit{nil-clean rings} in \cite{di} thus: A ring is called \textit{nil-clean} if every element can be written as a sum of a nilpotent and an idempotent. He established several fundamental properties and developed an almost comprehensive theory of such rings.

It is well known that the inclusion \( 1 + J(R) \subseteq U(R) \) is true always for any ring \( R \). Thereby, \( R \) is said to be a \textit{JU-ring} provided that the reverse containment also holds, that is, \( U(R) = 1 + J(R) \) (see \cite{D} and \cite{klm} as well). In a similar vein, \( R \) is called a \textit{UU-ring} provided that \( U(R) = 1 + Nil(R) \) (see, e.g., \cite{dl}).

Further on, C\u{a}lug\u{a}reanu and Lam defined in \cite{Clam} a non-zero element \( a \) in a ring \( R \) to be \textit{fine} if \( a \in U(R) + Nil(R) \) calling a ring \( R \) \textit{fine} if each non-zero element is fine, that is, \[ R \setminus \{0\} \subseteq U(R) + Nil(R) .\]

Moreover, Zhou introduced in \cite{z} the concept of \textit{generalized fine rings} like this: A ring \( R \) is said to be \textit{generalized fine} if each element lying outside the Jacobson radical can be written as a sum of a unit and a nilpotent element. That is,
\[
R \setminus J(R) \subseteq U(R) + Nil(R).
\]
Since the opposite inclusion always holds, because of the well-known equality $U(R)+J(R)=U(R)$, this condition is equivalent to
\[
R \setminus J(R) = U(R) + Nil(R).
\]

Furthermore, in \cite{xi}, Xiao and Tong defined the notion of \textit{\(n\)-clean rings} as a common generalization of clean rings: A ring is called \textit{\(n\)-clean} if every element can be expressed as a sum of an idempotent and \(n\) units.

Likewise, in \cite{wang}, the set
\[
\sqrt{J(R)} = \{ x \in R : x^{n} \in J(R) \text{ for some } n \ge 1 \}
\]
was introduced; it properly contains \( J(R) \), but obviously need \textit{not} be a subring of \( R \). Manifestly, the relation \( Nil(R) \subseteq \sqrt{J(R)} \) is fulfilled always.

On the other hand, Saini and Udar recently introduced in \cite{SU} the so-termed \textit{\(\sqrt{J}U\)-rings} as those rings for which \( U(R) = 1 + \sqrt{J(R)} \). One knows that this class properly contains both UU-rings and JU-rings.
Independently, Danchev et al. investigated in \cite{DDE} the same class of rings under the name \textit{\(UJ^\#\)-rings}, and especially their non-trivial extensions, including group rings.

In the present article, we take a step beyond these notions by introducing and exploring a \textbf{new} class of rings called by us \textit{generalized $\sqrt{J}$-fine rings}: We say a ring $R$ is \textit{generalized $\sqrt{J}$-fine} if every element \( a \in R \setminus J(R) \) can be written as a sum of a unit and an element from $\sqrt{J(R)}$. The main difference in our considerations is that instead of using nilpotent elements, we use the larger set $\sqrt{J(R)}$ of elements which genuinely contains \( J(R) \) as well. By allowing this broader collection of elements, we obtain a richer theory that still behaves nicely in many ways.

Our main goal in this paper is to explore the fundamental properties of these rings. We establish that they sit naturally between the classes of generalized fine rings and 2-clean rings (see Example \ref{11}, Theorem \ref{2.14}, and Example \ref{1}). Our most important result is that if a ring is generalized $\sqrt{J}$-fine, then the full matrix ring over it is also generalized $\sqrt{J}$-fine, no matter how large the matrix size is (see Theorem \ref{22}). We also examine when group rings are generalized $\sqrt{J}$-fine, and give a complete answer for the class of locally finite groups (see Theorem \ref{111}).

As a partial answer to an open question due to C\u{a}lug\u{a}reanu and Lam (\cite[Question 5.19]{Clam}), we prove that fine rings are always \(2\)-clean. Moreover, the class of generalized $\sqrt{J}$-fine rings lies strictly between the class of generalized fine rings and the class of \(2\)-clean rings.

Overall the presentation, our work aims to provide the reader with a clear and accessible introduction to generalized $\sqrt{J}$-fine rings by showing how they connect to earlier concepts and why they are worthy of studying in-depth.

\section{Basic Properties of Generalized $\sqrt{J}$-Fine Rings}

We begin our attack with two key instruments.

\begin{definition}\label{maj1}
Let \(R\) be a ring. We say a non-zero element \(r\in R\) is \textit{$\sqrt{J}$-fine} if \(r = u+a\) for some \(a\in\sqrt{J(R)}\) and \(u\in U(R)\); such a decomposition will, hereafter, be known as a \textit{$\sqrt{J}$-fine decomposition}. If $R \neq (0)$ and all elements in $R \setminus \{0\}$ are $\sqrt{J}$-fine, we say that $R$ is a \textit{$\sqrt{J}$-fine} ring. The set of all $\sqrt{J}$-fine elements is designated by $\sqrt{J}F(R)$.
\end{definition}

\begin{definition}\label{maj2}
We say a ring $R$ is \textit{generalized $\sqrt{J}$-fine} if each element of $R \setminus J(R)$ is $\sqrt{J}$-fine.
\end{definition}

The following comments are worthy of recording showing that the newly defined concept of generalized $\sqrt{J}$-fine rings is independent of being generalized fine one.

\begin{example}\label{11}
Every generalized fine ring is a generalized $\sqrt{J}$-fine ring, but the converse is not necessarily true. Consulting with \cite[Example 5]{full}, there exists a domain $R$ such that $R/J(R) \cong M_2(\mathbb{R})$. Hence, $R/J(R)$ is a generalized $\sqrt{J}$-fine ring in conjunction with Lemma \ref{2.4}; therefore, Lemma \ref{2.10} allows us to conclude that $R$ is a generalized $\sqrt{J}$-fine ring. However, since $\operatorname{Nil}(R) = (0)$, $R$ cannot be a generalized fine ring.
\end{example}

Our useful machinery is the next preliminary assertion.

\begin{proposition}\label{2}
For any ring \(R\), the following three points hold:

(1) If \(R \neq 0\), then \(\sqrt{J}F(R) \cap C(R) = U(C(R))\).

(2) \(v \in U(R)\) implies \(v\sqrt{J}F(R)v^{-1} = \sqrt{J}F(R)\). If, moreover, \(v \in U(R) \cap C(R)\), then \(v\sqrt{J}F(R) = \sqrt{J}F(R)\).

(3) If \(R \neq 0\) and \(\sqrt{J(R)} \subseteq J(R)\) (i.e., \(\sqrt{J(R)} = J(R)\)), then \(\sqrt{J}F(R) = U(R)\).
\end{proposition}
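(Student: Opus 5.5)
Each part reduces to elementary manipulations with the definition of a $\sqrt{J}$-fine decomposition. The only non-trivial point is in part (1): showing that a central $\sqrt{J}$-fine element is a unit, which I will get by passing to $R/J(R)$, where $\sqrt{J(R)}$ becomes the nilradical-type set.

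For (1), I start with the easy inclusion. If $v\in U(C(R))$, then $v\neq 0$ because $R\neq 0$, and $v=v+0$ with $0\in\sqrt{J(R)}$. Hence $v\in \sqrt{J}F(R)\cap C(R)$.

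Conversely, let $c\in C(R)$ be $\sqrt{J}$-fine, say $c=u+a$ with $u\in U(R)$ and $a^n\in J(R)$. Write $\bar R=R/J(R)$. Then $\bar a^n=0$ in $\bar R$, and $\bar c$ is central in $\bar R$. Since $\bar u=\bar c-\bar a$ and both $\bar c$ and $\bar a$ commute with $\bar a$, the unit $\bar u$ commutes with $\bar a$. Therefore $\bar u^{-1}$ also commutes with $\bar a$, and so $(\bar u^{-1}\bar a)^n=\bar u^{-n}\bar a^n=0$. It follows that
\[
\bar c=\bar u(1+\bar u^{-1}\bar a)
\]
is a unit of $\bar R$. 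Units lift modulo the Jacobson radical, so $c\in U(R)$. Finally, the inverse of a central unit is central: from $cx=xc$ we get $xc^{-1}=c^{-1}x$. Thus $c\in U(C(R))$.

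For (2), let $r=u+a$ be a $\sqrt{J}$-fine decomposition. Then
\[
vrv^{-1}=vuv^{-1}+vav^{-1},
\]
where $vuv^{-1}\in U(R)$ and $(vav^{-1})^n=va^nv^{-1}\in J(R)$, since $J(R)$ is an ideal. Also $vrv^{-1}\neq 0$. This gives $v\sqrt{J}F(R)v^{-1}\subseteq\sqrt{J}F(R)$. Applying the same inclusion with $v^{-1}$ in place of $v$ gives equality.

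If moreover $v$ is central, then $vr=vu+va$ with $vu\in U(R)$ and $(va)^n=v^na^n\in J(R)$, and $vr\neq 0$. So $v\sqrt{J}F(R)\subseteq \sqrt{J}F(R)$. The same argument with the central unit $v^{-1}$ gives the reverse inclusion.

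For (3), if $\sqrt{J(R)}=J(R)$, then every $\sqrt{J}$-fine element lies in $U(R)+J(R)=U(R)$. Conversely, each unit $u$ is non-zero (as $R\neq 0$) and decomposes as $u=u+0$, so it is $\sqrt{J}$-fine. Hence $\sqrt{J}F(R)=U(R)$.
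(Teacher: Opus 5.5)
Your proposal is correct and follows the paper's proof in all three parts. The only difference is in (1): you show that $1+u^{-1}a$ is a unit by passing to $R/J(R)$ and lifting units, which just proves the fact $1+\sqrt{J(R)}\subseteq U(R)$ that the paper uses directly. You also fill in small details the paper leaves implicit: non-zeroness, centrality of $c^{-1}$, and the inclusion $U(R)\subseteq\sqrt{J}F(R)$ in (3).
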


\begin{proof}
(1) We know that \(U(C(R))\subseteq \sqrt{J}F(R) \cap C(R)\). Conversely, take \(r \in  \sqrt{J}F(R)\cap C(R)\) and consider a $\sqrt{J}$-fine decomposition \(r = u + a\). Since \(u\) commutes with \(a = r - u\), we have
	\[
	r = u(1 + u^{-1}a) \in U(R) \cdot U(R) \subseteq U(R).
	\]
Thus, \(r \in U(C(R))\), as required.
	
(2) Choose $j\in \sqrt{J(R)}$. It is easy to see that $vjv^{-1}\in \sqrt{J(R)}$. So, the first statement is obvious. For the second one, if \(r = u + a\) is a $\sqrt{J}$-fine decomposition of \(r \in \sqrt{J}F(R)\) and \(v \in U(R) \cap C(R)\), then
	\[
	vr = vu + va
	\]
is a $\sqrt{J}$-fine decomposition of \(va\), whence \(v\sqrt{J}F(R) \subseteq \sqrt{J}F(R)\). Replacing \(v\) by \(v^{-1}\) gives the requested equality.
	
(3) Under the given hypothesis, take any \(r \in R\) with $\sqrt{J}$-fine decomposition \(r = u + a\). Then, \(a \in J(R)=\sqrt{J(R)}\). Consequently,
	\[
	r \in U(R) + J(R) \subseteq U(R),
	\]
and, therefore, \(\sqrt{J}F(R) = U(R)\), as needed.
\end{proof}

Our first principal result sounds as follows.

\begin{theorem}\label{2-good}
For any $\sqrt{J}$-fine ring \(R\), the following two items are valid:

(1) \(R = \{1\} \cup \bigl(U(R) + U(R)\bigr)\).

(2) \(R\) is 2-good if and only if \(|R| \neq 2\).
\end{theorem}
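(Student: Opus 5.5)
The plan is first to reduce everything to the ring $R/J(R)$. We then show that a $\sqrt{J}$-fine ring actually has zero Jacobson radical. After that, item (1) is a short computation, and item (2) reduces to the case of fine rings.

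\textbf{Step 1: $J(R)=0$.} Suppose $0\neq j\in J(R)$. Then $j$ is $\sqrt{J}$-fine, so $j=u+a$ with $u\in U(R)$ and $a\in\sqrt{J(R)}$. Pass to $\bar R=R/J(R)\neq 0$. There $\bar u=-\bar a$, and $\bar a$ is nilpotent because $a^n\in J(R)$ for some $n$. So $\bar u$ is a nilpotent unit of a nonzero ring, which is impossible. Hence $J(R)=0$. Consequently $\sqrt{J(R)}=\operatorname{Nil}(R)$, and a $\sqrt{J}$-fine ring is exactly a fine ring in the sense of C\u{a}lug\u{a}reanu--Lam.

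\textbf{Step 2: proof of (1).} The inclusion $\supseteq$ is trivial. For $\subseteq$, take $r\neq 1$. Then $r-1\neq 0$, so $r-1=u+a$ with $u\in U(R)$ and $a\in\sqrt{J(R)}$. This gives $r=u+(1+a)$. It remains to check that $1+a\in U(R)$. Modulo $J(R)$, the element $\bar a$ is nilpotent, so $1+\bar a$ is a unit of $R/J(R)$. Units lift modulo the Jacobson radical, so $1+a\in U(R)$. By Step 1 one could even say $a$ is nilpotent directly. Hence $r\in U(R)+U(R)$.

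\textbf{Step 3: proof of (2).} By (1), $R$ is 2-good if and only if $1\in U(R)+U(R)$, that is, if and only if there is a unit $w$ with $1-w\in U(R)$.
\begin{itemize}
\item If $|R|=2$, then $R\cong\mathbb{F}_2$. Its only unit is $1$, and $1+1=0\neq 1$, so $R$ is not 2-good.
\item Conversely, suppose $|R|\neq 2$. By Step 1, $R$ is a fine ring. The plan is to invoke C\u{a}lug\u{a}reanu--Lam's result \cite{Clam} that a fine ring other than $\mathbb{F}_2$ is 2-good.
\end{itemize}
If a self-contained argument is preferred, I would proceed as follows. If $2\in U(R)$, then $1=2+(-1)$ already works. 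Otherwise I would pick an element $y\notin\{0,1\}$, which exists since $|R|>2$. Writing $y=u+n$ with $u$ a unit and $n$ nilpotent, I would try to produce a unit $w\neq 1$ with $1-w$ invertible, using the standard fact that $1-(\text{nilpotent})$ is a unit.

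\textbf{Main obstacle.} This last self-contained step in (2), showing $1\in U(R)+U(R)$ when $|R|\neq 2$, is where I expect the real difficulty. My first choice is therefore to quote the fine-ring theorem of \cite{Clam} via the reduction in Step 1.
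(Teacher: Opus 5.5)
Your Steps 1 and 2 are correct, and Step 1 is something the paper never observes. A nonzero $j\in J(R)$ cannot be $\sqrt{J}$-fine, because $u=j-a$ would map to a nilpotent unit of $R/J(R)\neq 0$. Hence $J(R)=0$ and $\sqrt{J(R)}=\operatorname{Nil}(R)$, so $\sqrt{J}$-fine rings are exactly the fine rings of C\u{a}lug\u{a}reanu--Lam. Running the same argument with any proper ideal $I$ in place of $J(R)$ shows that such rings are even simple. Step 2 is the paper's proof of (1).

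So reducing (2) to the fine-ring statement is a legitimate route, and it shows that the theorem is not really new. The paper's route is different: it argues directly, never using $J(R)=0$, only the inclusion $1+\sqrt{J(R)}\subseteq U(R)$.

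There is, however, a gap: your proposal does not prove the one nontrivial implication, $|R|\neq 2\Rightarrow R$ is 2-good. You either hand it to C\u{a}lug\u{a}reanu--Lam, which is acceptable only if that exact statement is quoted from their paper, or leave it as a sketch that stalls. The missing idea, which is the paper's argument, is to decompose $1-v$ for a unit $v\neq 1$, rather than decomposing an arbitrary element.

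First, if $|R|>2$ then $U(R)\neq\{1\}$. Otherwise every nonzero $r$ would have the form $1+a$ with $a\in\sqrt{J(R)}$, so $r\in U(R)=\{1\}$ and $R=\{0,1\}$. Your element $y\notin\{0,1\}$ also gives this: in $y=u+n$, either $u\neq 1$, or $y=1+n$ is itself a unit different from $1$.

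Now take a unit $v\neq 1$ and write $1-v=w+b$ with $w\in U(R)$ and $b\in\sqrt{J(R)}$. Then $x:=w+v=1-b$ is a unit, so $1=x^{-1}w+x^{-1}v\in U(R)+U(R)$. By (1), $R$ is then 2-good. Your case distinction on whether $2\in U(R)$ is unnecessary.
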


\begin{proof}
(1) Take any \(r \in R\). If \(r = 1\), we are done. Otherwise, \(r - 1 \in \sqrt{J}F(R)\) and thus it admits a $\sqrt{J}$-fine decomposition \(r - 1 = u + a\) with \(u \in U(R)\) and \(a \in \sqrt{J(R)}\). Hence,
\[
r = u + (1 + a) \in U(R) + U(R),
\]
because \(1 + a \in U(R)\). This establishes (1).
	
(2) The ``if'' part is pretty easy. Conversely, suppose \(|R| > 2\) and assume, for contradiction, that \(R\) is 2-good. We first show that \(U(R) \neq \{1\}\). If \(U(R) = \{1\}\), then any non-zero \(r \in R\) has a $\sqrt{J}$-fine decomposition \(r = 1 + a\) with \(a \in \sqrt{J(R)}\); consequently, \(r \in U(R)\) and hence \(r = 1\). This would force \(|R| = 2\), contradicting the hypothesis. So, there exists a unit \(v \neq 1\).
	
Now, consider a $\sqrt{J}$-fine decomposition of \(1 - v\neq0\):
	\[
	1 - v = w + b \quad (w \in U(R),\; b \in \sqrt{J(R)}).
	\]
Set \(x := w + v = 1 - b\). Since $b \in \sqrt{J(R)}$, it must be that \(x \in U(R)\). Therefore,
	\[
	1 = x^{-1}w + x^{-1}v \in U(R) + U(R),
	\]
which means that \(1\) is a sum of two units. Thus, \(R\) is 2-good, as wanted.
\end{proof}

We proceed by proving the following helpful technicality.

\begin{lemma}\label{2.10}
Let \( I \) be an ideal of a ring \( R \) with \( I \subseteq J(R) \). Then, \( R \) is generalized $\sqrt{J}$-fine if and only if \( R/I \) is generalized $\sqrt{J}$-fine.
\end{lemma}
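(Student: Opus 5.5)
The plan is to use three standard facts about an ideal $I \subseteq J(R)$, writing $\bar{R} = R/I$ and $\bar{x} = x+I$:
\begin{enumerate}
\item $J(\bar{R}) = J(R)/I$;
\item units lift modulo $I$, i.e. $\bar{x} \in U(\bar{R})$ if and only if $x \in U(R)$;
\item $\sqrt{J(\bar{R})} = \{\bar{x} : x \in \sqrt{J(R)}\}$.
\end{enumerate}
For (1): $J(R/I)=J(R)/I$ holds whenever $I\subseteq J(R)$. For (2): if $\bar{x}\bar{y}=\bar{1}=\bar{y}\bar{x}$, then $xy, yx \in 1+I \subseteq 1+J(R) \subseteq U(R)$, so $x$ has a right and a left inverse and hence is a unit. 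For (3): $\bar{x}^n \in J(\bar{R})=J(R)/I$ if and only if $x^n \in J(R)$. This uses $I\subseteq J(R)$ again: $x^n + I \in J(R)/I$ means $x^n \in J(R)+I = J(R)$. In particular, $x \in R\setminus J(R)$ if and only if $\bar{x} \in \bar{R}\setminus J(\bar{R})$.

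For the forward direction, assume $R$ is generalized $\sqrt{J}$-fine and take $\bar{r} \in \bar{R}\setminus J(\bar{R})$. By (1), $r \notin J(R)$. Write $r = u + a$ with $u \in U(R)$ and $a \in \sqrt{J(R)}$. Then $\bar{r} = \bar{u} + \bar{a}$, where $\bar{u}$ is a unit and $\bar{a} \in \sqrt{J(\bar{R})}$ by (3). The edge case $\bar{R}=0$ cannot occur, since $I\subseteq J(R)\neq R$ for $R \neq 0$.

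For the converse, assume $\bar{R}$ is generalized $\sqrt{J}$-fine and take $r \in R\setminus J(R)$. Then $\bar{r} \notin J(\bar{R})$, so $\bar{r} = \bar{u} + \bar{a}$ for some $u, a \in R$ with $\bar{u}\in U(\bar{R})$ and $\bar{a} \in \sqrt{J(\bar{R})}$. By (2), $u \in U(R)$. By (3), $a \in \sqrt{J(R)}$. We do not have $r = u + a$ on the nose, only $r - u - a =: i \in I$. Absorb the error into the radical part by setting $a' := a + i$, so that $r = u + a'$. Then $\bar{a'} = \bar{a}$, so $\bar{a'}^n \in J(\bar{R})$ for some $n$, and by (3) $a'^n \in J(R)$, i.e. $a' \in \sqrt{J(R)}$. (One could instead absorb $i$ into the unit, since $u+i \in U(R)+J(R)=U(R)$.) This gives the required decomposition $r = u + a'$.

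The main point needing care is the converse, specifically the lifting step (3). The reason it works is that membership in $\sqrt{J(R)}$ depends only on the coset modulo $I$ when $I \subseteq J(R)$; this is exactly where the hypothesis $I\subseteq J(R)$ is used. Everything else is routine.
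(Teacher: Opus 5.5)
Your proof is correct and follows essentially the same route as the paper. Both arguments use $J(R/I)=J(R)/I$ and $\sqrt{J(R/I)}=\sqrt{J(R)}/I$, lift the unit and the $\sqrt{J}$-part, and absorb the error $r-u-a\in I\subseteq J(R)$ into the radical part; the only difference is that you verify these lifting facts and the closure $\sqrt{J(R)}+J(R)\subseteq\sqrt{J(R)}$ directly, where the paper quotes them from the literature.
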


\begin{proof}
Since \( I \subseteq J(R) \), we may write \( J(R/I) = J(R)/I \). Also, activating \cite[Lemma 2.1(5)]{DDE}, we have \(\sqrt{J(R/I)} = \sqrt{J(R)}/I\).
	
(\(\Rightarrow\)) Suppose \( R \) is generalized \(\sqrt{J}\)-fine. Take \( \bar{r} \in R/I \) with \( \bar{r} \notin J(R/I) \). Then, \( r \notin J(R) \). Since \( R \) is generalized \(\sqrt{J}\)-fine, we can write \( r = u + a \) with \( u \in U(R) \) and \( a \in \sqrt{J(R)} \). Hence, \( \bar{r} = \bar{u} + \bar{a} \), where \( \bar{u} \in U(R/I) \) and \( \bar{a} \in \sqrt{J(R/I)} \) (cf. \cite[Lemma 2.1(5)]{DDE}). So, \( R/I \) is generalized \(\sqrt{J}\)-fine.
	
(\(\Leftarrow\)) Now, suppose \( R/I \) is generalized \(\sqrt{J}\)-fine. Take \( r \notin J(R) \). Then, \( \bar{r} \notin J(R/I) \), and so \( \bar{r} = \bar{u} + \bar{a} \) for some \( \bar{u} \in U(R/I) \) and \( \bar{a} \in \sqrt{J(R/I)} \). Since \( I \subseteq J(R) \), we can lift \( \bar{u} \) to \( u \in U(R) \). Also, \cite[Lemma 2.1(5)]{DDE} guarantees that we can lift \( \bar{a} \) to \( a \in \sqrt{J(R)} \). Thus, \( r - u - a \in I \subseteq J(R) \). Letting \( j := r - u - a \in J(R) \), we deduce
$
	r = u + a + j.
$
Since \( a \in \sqrt{J(R)} \) and \( j \in J(R) \), \cite[Lemma 2.11(1)]{DDE} works to get \( a + j \in \sqrt{J(R)} \). Therefore, \( R \) is generalized \(\sqrt{J}\)-fine, as desired.
\end{proof}

As a consequence, we yield.

\begin{corollary}
Let \( R \) be a ring, \( \sigma \) an endomorphism of \( R \), and \( M \) a non-trivial bi-module over \( R \). Then, the following statements are fulfilled:
	
(1) The trivial extension \( R \propto M \) is a generalized \(\sqrt{J}\)-fine ring if and only if \( R \) is a generalized \(\sqrt{J}\)-fine ring.
	
(2) For \( n \ge 2 \), the quotient ring \( R[x]/(x^n) \) is generalized \(\sqrt{J}\)-fine if and only if \( R \) is generalized \(\sqrt{J}\)-fine.

(3) \( R[[x;\sigma]] \) is a generalized \(\sqrt{J}\)-fine ring if and only if \( R \) is a generalized \(\sqrt{J}\)-fine ring.
\end{corollary}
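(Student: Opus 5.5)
All three parts reduce to Lemma \ref{2.10}. In each case I will exhibit an ideal $I$ of the extension ring $S$ such that $I\subseteq J(S)$ and $S/I\cong R$. Lemma \ref{2.10} then says at once that $S$ is generalized $\sqrt{J}$-fine if and only if $R$ is.

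For (1), take $S=R\propto M=\{(r,m): r\in R,\ m\in M\}$ with multiplication $(r,m)(s,n)=(rs,rn+ms)$, and put $I=0\propto M=\{(0,m)\}$. This is a two-sided ideal with $I^2=0$, and $S/I\cong R$ via $(r,m)\mapsto r$. A nilpotent ideal lies in the Jacobson radical, so $I\subseteq J(S)$. Alternatively, check directly that $1+(s,n)(0,m)=(1,sm)$ is a unit with inverse $(1,-sm)$.

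For (2), take $S=R[x]/(x^n)$ and $I=(x)/(x^n)$, i.e. the elements $\sum_{i\ge1}a_ix^i$. Then $I$ is an ideal because $x$ is central in $R[x]$, and $I^n=0$. So $I$ is a nilpotent ideal, hence $I\subseteq J(S)$, and $S/I\cong R$ via the constant-term map.

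For (3), take $S=R[[x;\sigma]]$, with the rule $xa=\sigma(a)x$, and let $I=xS$ be the set of power series with zero constant term. This is a two-sided ideal: both $Sx$ and $xS$ equal the set of series with zero constant term, by the twisting rule. The constant-term map $S\to R$ is a surjective ring homomorphism because $\sigma$ is an endomorphism, and its kernel is $I$, so $S/I\cong R$.

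The only step needing care is $I\subseteq J(S)$ in (3), since here $I$ is not nilpotent. I would show that $1+f$ is a unit for every $f\in I$. Write $f=xg$. The formal geometric series $\sum_{k\ge0}(-f)^k$ is well defined in $S$, because $f^k$ has all terms of degree at least $k$, so each coefficient is a finite sum. This series is a two-sided inverse of $1+f$. Since $I$ is an ideal, $1+sf$ is then a unit for every $s\in S$, which gives $I\subseteq J(S)$.

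With this, the three corollaries follow from Lemma \ref{2.10} exactly as stated. No hypothesis on $\sigma$ beyond being a (unital) ring endomorphism is needed, and the non-triviality of $M$ plays no role in the argument.
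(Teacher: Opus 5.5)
Your proposal is correct and follows the paper's intended route: the paper states this corollary as an immediate consequence of Lemma \ref{2.10}, applied to the ideals $0\propto M$, $(x)/(x^n)$ and the ideal of series with zero constant term, each of which lies in the Jacobson radical and has quotient isomorphic to $R$.

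One small slip in (3). With $xa=\sigma(a)x$ you have $Sx=I$, but $xS=\{\sum_{j\ge 0}\sigma(b_j)x^{j+1}\}$, which equals $I$ (and is a left ideal) only when $\sigma$ is surjective. So define $I$ as $Sx$ and drop ``write $f=xg$''. Your geometric-series inverse uses only that $f$ has zero constant term.
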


Standardly, the symbol $Nil_*(R)$ stands for the \textit{lower nil-radical} of a ring $R$. Recall
that a ring $R$ is termed \textit{$2$-primal}, provided that $Nil(R)=Nil_*(R)$. For an endomorphism $\alpha$ of $R$, a ring $R$ is said to be $\alpha$-compatible if, for any elements $a$ and $b$ in $R$, the equality $ab = 0$ holds if, and only if, $a\alpha(b) = 0$. This definition is firstly given in \cite{amin}. In this case, the map $\alpha$ must be injective.

\begin{lemma}\label{fi}
Assume that $R$ is a $2$-primal ring and $\alpha$-compatible. Then,
\[
Nil_*(R)[x; \alpha] = J(R[x; \alpha]) = \sqrt{J(R[x; \alpha])}.
\]
\end{lemma}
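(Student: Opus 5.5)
We prove the chain of inclusions
\[
Nil_*(R)[x;\alpha]\subseteq J(R[x;\alpha])\subseteq \sqrt{J(R[x;\alpha])}\subseteq Nil_*(R)[x;\alpha].
\]
The middle inclusion is trivial, so only the first and last need work.

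\textbf{Standard facts for $\alpha$-compatible $2$-primal rings.} We first record the facts we will rely on. If $R$ is $2$-primal, then $Nil(R)=Nil_*(R)$ is an ideal and $R/Nil_*(R)$ is reduced. If $R$ is moreover $\alpha$-compatible, then:
\begin{itemize}
\item $\alpha(Nil_*(R))\subseteq Nil_*(R)$, and $\alpha^{-1}(Nil_*(R))= Nil_*(R)$;
\item $\bar R=R/Nil_*(R)$ inherits an induced endomorphism $\bar\alpha$, and $\bar R$ is reduced and $\bar\alpha$-compatible (hence $\bar\alpha$-rigid).
\end{itemize}
The key input is the known result (Hong--Kim--Kwak, and the $\alpha$-compatible version due to Hashemi--Moussavi) that for such $R$,
\[
Nil(R[x;\alpha])=Nil_*(R[x;\alpha])=Nil_*(R)[x;\alpha],
\]
and that $R[x;\alpha]/Nil_*(R)[x;\alpha]\cong \bar R[x;\bar\alpha]$.

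\textbf{First inclusion.} Since $Nil_*(R)[x;\alpha]$ is a nil ideal of $R[x;\alpha]$ (it equals the prime radical of $R[x;\alpha]$), it is contained in $J(R[x;\alpha])$.

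\textbf{Last inclusion.} Let $f\in R[x;\alpha]$ with $f^n\in J(R[x;\alpha])$. Pass to $S=\bar R[x;\bar\alpha]$. Because the quotient map is surjective, the image of $J(R[x;\alpha])$ lies in $J(S)$, so $\bar f^{\,n}\in J(S)$.

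We claim $J(S)=0$ for a reduced ring $\bar R$ with $\bar\alpha$ rigid. This is the main obstacle. The plan is as follows. Take $g=\sum_{i=0}^m b_ix^i\in J(S)$. Then $1+gx$ is a unit in $S$. Over an $\alpha$-rigid reduced ring, a standard degree/leading-coefficient argument shows that the units of $\bar R[x;\bar\alpha]$ are exactly those of $U(\bar R)$ plus nilpotent coefficients in positive degree. Since $\bar R$ is reduced, $U(S)=U(\bar R)$. The polynomial $1+gx$ has constant term $1$, so all its higher coefficients vanish, forcing $g=0$.

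Concretely, to carry out the degree argument, suppose $(1+gx)h=1$. We use the rigidity identity: $ab=0$ implies $a\bar\alpha^k(b)=0$ and $ba=0$ (the second because reduced rings are reversible). Comparing top-degree coefficients of $(1+gx)h$ then shows that the leading coefficients multiply to $0$, and inducting downward yields $g=0$.

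\textbf{Conclusion.} With $J(S)=0$, we get $\bar f^{\,n}=0$, so $\bar f$ is nilpotent in $S$. A nilpotent element of $S$ must be zero: $S$ is reduced, since $\bar R$ is reduced and $\bar\alpha$-rigid (the standard Krempa/Hong--Kim--Kwak result). Hence $\bar f=0$, i.e. $f\in Nil_*(R)[x;\alpha]$, which closes the chain and gives the three equalities.
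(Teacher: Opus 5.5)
Your proof is correct, but it follows a different route from the paper's.

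\textbf{The paper's route.} The paper never leaves $R[x;\alpha]$. Its only tool is the unit criterion it imports: $\sum a_ix^i$ is a unit of $R[x;\alpha]$ iff $a_0\in U(R)$ and $a_i\in Nil_*(R)$ for $i\ge 1$. It applies this three times:
\begin{itemize}
\item to $1-fg$, giving $Nil_*(R)[x;\alpha]\subseteq J(R[x;\alpha])$;
\item to $1-f$, a unit because $f\in\sqrt{J}$, giving $a_i\in Nil_*(R)$ for $i\ge 1$;
\item to $1-f^mx$, whose $x$-coefficient is $-a_0^m$, giving $a_0\in Nil(R)=Nil_*(R)$.
\end{itemize}

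\textbf{Your route.} You pass to $S=\bar R[x;\bar\alpha]$ and prove the structural fact that $S$ is reduced with $J(S)=0$. Then $J$ and $\sqrt{J}$ of $R[x;\alpha]$ both collapse onto the kernel $Nil_*(R)[x;\alpha]$ at once.

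\textbf{Comparison.} The core trick is the same: your $1+gx$ plays the role of the paper's $1-f^mx$. Your fact $U(S)=U(\bar R)$ is the reduced case of the paper's unit criterion, which is usually proved by exactly this reduction. Your route costs more background, namely that
\begin{itemize}
\item $Nil_*(R)[x;\alpha]$ is nil,
\item $\bar R$ is $\bar\alpha$-compatible (hence $\bar\alpha$-rigid), and
\item $S$ is reduced.
\end{itemize}
All of these hold, but you should cite or verify them. In an $\alpha$-compatible ring you may insert or delete $\alpha$ on any single factor of a vanishing product, so $(ab)^k=0\iff(a\alpha(b))^k=0$. This gives compatibility of $\bar R$ directly. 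Combined with local nilpotence of $Nil_*(R)$, it also gives nilness of $Nil_*(R)[x;\alpha]$. Your ``inducting downward'' step for $U(S)=U(\bar R)$ is only an outline; it is a standard fact, but as written it is a sketch rather than a proof.

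In return, your version explains the lemma conceptually: $R[x;\alpha]/Nil_*(R)[x;\alpha]$ is a reduced, Jacobson-semisimple ring. The paper's version checks the two inclusions by hand from one cited lemma.
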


\begin{proof}
It suffices to show that $Nil_*(R)[x; \alpha] \subseteq J(R[x; \alpha])$, and that $\sqrt{J(R[x; \alpha])} \subseteq Nil_*(R)[x; \alpha]$. Choosing $f=\sum_{i=0}^{n}a_ix^i \in Nil_*(R)[x; \alpha]$, it is then easily seen, for any $g=\sum_{i=0}^{n}b_ix^i \in R[x; \alpha]$, that $fg \in Nil_*(R)[x; \alpha]$, whence from \cite[Lemma 2.1]{che} we perceive that $1-fg \in U(R[x; \alpha])$ implying $f \in J(R[x; \alpha])$.

Now, assuming $f=\sum_{i=0}^{n}a_ix^i \in \sqrt{J(R[x; \alpha])}$, \cite[Lemma 2.1(3)]{DDE} assures that $1-f \in U(R[x; \alpha])$, whence from \cite[Lemma 2.1]{che} we infer for any $1\le i \le n$ that $a_i \in Nil_*(R)$. 

On the other hand, since $f \in \sqrt{J(R[x; \alpha])}$ exists, there is $m \in \mathbb{N}$ with $f^m \in J(R[x; \alpha])$, so that $1-f^m x \in U(R[x; \alpha])$ and then again from \cite[Lemma 2.1]{che} we conclude that $a_0^m \in Nil(R)$. Hence, $a_0 \in Nil(R)$, and since $R$ is a $2$-primal ring, it follows that $a_0 \in Nil_*(R)$. Thus, for every $0 \le i \le n$, we derive $a_i \in Nil_*(R)$, as asked.
\end{proof}

We now define
\[
G\sqrt{J}F(R):=\left\{ a \in R \setminus J(R) \mid \exists u \in U(R),\; j \in \sqrt{J(R)},\text{ s.t. } a=u+j  \right\}.
\]

The following technicality is now true.

\begin{lemma}
Let $R$ be an $\alpha$-compatible ring. Then, the following two conditions are valid:

(1) $R$ is a $2$-primal ring if and only if $$G\sqrt{J}F(R[x; \alpha])=U(R)+\operatorname{Nil}_*(R)[x; \alpha]x.$$

(2) $R$ is a reduced ring if and only if $$G\sqrt{J}F(R[x; \alpha])=U(R).$$
\end{lemma}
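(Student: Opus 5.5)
The plan is to compute both sides explicitly from the description of $U(R[x;\alpha])$ and $J(R[x;\alpha])$. For the converses, the idea is to test the hypothesised equality on elements of the form $1+ax$ with $a$ nilpotent.

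\emph{Forward direction of (1).} Assume $R$ is $2$-primal and $\alpha$-compatible. By Lemma \ref{fi},
\[
J(R[x;\alpha])=\sqrt{J(R[x;\alpha])}=Nil_*(R)[x;\alpha].
\]
So $G\sqrt{J}F(R[x;\alpha])$ consists of the elements of $\bigl(U(R[x;\alpha])+J(R[x;\alpha])\bigr)\setminus J(R[x;\alpha])$. Since $U+J=U$, and no unit of the non-zero ring $R[x;\alpha]$ lies in its Jacobson radical, this set is exactly $U(R[x;\alpha])$.

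It remains to identify $U(R[x;\alpha])$. I would invoke \cite[Lemma 2.1]{che}, already used in the proof of Lemma \ref{fi}. For a $2$-primal $\alpha$-compatible ring it says that $f=\sum a_ix^i$ is a unit if and only if $a_0\in U(R)$ and $a_i\in Nil_*(R)$ for all $i\ge 1$. For the reverse inclusion one can also argue directly: if $u\in U(R)$ and $g\in Nil_*(R)[x;\alpha]x\subseteq J(R[x;\alpha])$, then $u+g\in U+J=U$. This gives
\[
U(R[x;\alpha])=U(R)+Nil_*(R)[x;\alpha]x,
\]
as claimed.

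\emph{Converse of (1).} Suppose the displayed equality holds, and let $a\in Nil(R)$; we must show $a\in Nil_*(R)$. Since $a$ is nilpotent, $ax$ is nilpotent in $R[x;\alpha]$. Indeed, $(ax)^k=a\alpha(a)\cdots\alpha^{k-1}(a)x^k$, and $\alpha$-compatibility transports the relation $a^n=0$ into the vanishing of such products. This is the one place where compatibility is used in a computational way, and I expect it to be the main technical point.

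Given that $ax$ is nilpotent, $1+ax$ is a unit of $R[x;\alpha]$, hence is not in $J(R[x;\alpha])$. So $1+ax\in G\sqrt{J}F(R[x;\alpha])=U(R)+Nil_*(R)[x;\alpha]x$. Comparing the coefficient of $x$ yields $a\in Nil_*(R)$. Hence $Nil(R)=Nil_*(R)$, i.e. $R$ is $2$-primal.

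If the nilpotency of $ax$ turns out to be awkward, I would use a fallback. First, $a\notin J(R[x;\alpha])$ would force $a\in G\sqrt{J}F=U(R)+Nil_*(R)[x;\alpha]x$, so $a$ would be a nilpotent unit and $R=0$. Thus $a\in J(R[x;\alpha])$, hence $ax\in J(R[x;\alpha])$, and therefore $1+ax$ is a unit without any computation.

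\emph{Statement (2).} If $R$ is reduced, it is $2$-primal with $Nil_*(R)=0$, so (1) gives $G\sqrt{J}F(R[x;\alpha])=U(R)$. Conversely, assume $G\sqrt{J}F(R[x;\alpha])=U(R)$ and let $a\in Nil(R)$. Exactly as above, $1+ax$ is a unit, so $1+ax\in U(R)$. Comparing coefficients gives $ax=0$, hence $a=0$, and $R$ is reduced.
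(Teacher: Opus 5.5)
Your main argument is correct and is essentially the paper's proof. For the forward direction you use Lemma~\ref{fi} together with the unit criterion \cite[Lemma 2.1]{che}; your observation that $\sqrt{J(R[x;\alpha])}=J(R[x;\alpha])$ forces $G\sqrt{J}F(R[x;\alpha])=U(R[x;\alpha])$ is just a tidier packaging of the paper's coefficientwise computation. For both converses you test $1+ax$, with $ax$ nilpotent by \cite[Lemma 3.1]{chen}, and compare coefficients, exactly as the paper does.

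Discard the fallback, however. The claim that $a\notin J(R[x;\alpha])$ forces $a\in G\sqrt{J}F(R[x;\alpha])$ is unjustified. That set only contains elements outside the radical that actually admit a unit-plus-$\sqrt{J}$ decomposition, and nothing in the hypothesis guarantees such a decomposition for $a$.
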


\begin{proof}
(1) Assume that $R$ is a $2$-primal ring, and write $f:=\sum_{i=0}^{n}a_ix^i \in G\sqrt{J}F(R[x; \alpha])$. Then, there exist $u=\sum_{i=0}^{n}u_ix^i \in U(R[x; \alpha])$ and $d=\sum_{i=0}^{n}d_ix^i \in \sqrt{J(R[x; \alpha])}$ such that $f=u+d$. From Lemma \ref{fi}, we obtain for each $0\le i\le n$ that $d_i \in \operatorname{Nil}_*(R)$, and also from \cite[Lemma 2.1]{che}, we have that $u_0 \in U(R)$ and, for every $1\le i\le n$, that $u_i \in \operatorname{Nil}_*(R)$. Hence,
\[
f=(u_0+d_0)+\sum_{i=1}^{n}(u_i +d_i)x^i.
\]
Since $\operatorname{Nil}_*(R)$ is a nil-ideal, we find $u_0+d_0 \in U(R)$ and, for each $1\le i\le n$, $u_i+d_i \in \operatorname{Nil}_*(R)$ yielding that $f \in U(R)+\operatorname{Nil}_*(R)[x; \alpha]x$.

Conversely, thanks to \cite[Lemma 2.1]{che}, we have $$U(R)+\operatorname{Nil}_*(R)[x; \alpha]x \subseteq U(R[x; \alpha]),$$ which ensures that $$U(R)+\operatorname{Nil}_*(R)[x; \alpha]x \subseteq G\sqrt{J}F(R[x; \alpha]).$$

Now reciprocally, assume that
\[
G\sqrt{J}F(R[x; \alpha])=U(R)+\operatorname{Nil}_*(R)[x; \alpha]x
\]
holds, and let $a \in \operatorname{Nil}(R)$. Then, there exists $n \in \mathbb{N}$ such that $a^n=0$, so from \cite[Lemma 3.1]{chen} we detect
\[
(ax)^n=a\alpha(a)\cdots \alpha^{n-1}(a)x^n=0.
\]
Thus, $ax \in \sqrt{J(R[x; \alpha])}$, and since $1+ax \notin \sqrt{J(R[x; \alpha])}$ it must be that $1+ax \in G\sqrt{J}F(R[x; \alpha])$ forcing that $a \in \operatorname{Nil}_*(R)$. Finally, $R$ is a $2$-primal ring, as pursued.

(2) Since $R$ is reduced, we know that $\operatorname{Nil}(R)=(0)$. Then, similarly to part (1), the result follows at once.
\end{proof}

We further continue with a series of technical claims.

\begin{proposition}\label{2.13}
Let \( R \) be a generalized \(\sqrt{J}\)-fine ring. Then, the following two issues are fulfilled:

(1) \( R = (1 + J(R)) \cup (U(R) + U(R)) \).

(2) \( R \) is 2-good if and only if \( R = J(R) \cup (1 + J(R)) \).
\end{proposition}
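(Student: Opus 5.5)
For part (1) I would argue exactly as in Theorem \ref{2-good}(1), now splitting into three cases for $r \in R$. If $r \in 1+J(R)$ there is nothing to prove. If $r \in J(R)$, then $r-1 \in -1+J(R) \subseteq U(R)$, so $r = (r-1) + 1 \in U(R)+U(R)$. Otherwise $r \notin J(R)$ and $r \notin 1+J(R)$, so $r-1 \notin J(R)$. The generalized $\sqrt{J}$-fine hypothesis then gives $r-1 = u+a$ with $u \in U(R)$ and $a \in \sqrt{J(R)}$. Hence $r = u + (1+a)$, and $1+a \in U(R)$ by \cite[Lemma 2.1(3)]{DDE}. This covers all of $R$, so $R = (1+J(R)) \cup (U(R)+U(R))$.

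For part (2) the plan is to reduce everything to the single element $1$. By part (1), every element outside $1+J(R)$ already lies in $U(R)+U(R)$. If $1 = x+y$ with $x,y \in U(R)$, then for $r = 1+j$ with $j \in J(R)$ we get $r = x + (y+j)$, and $y+j \in U(R)+J(R) = U(R)$. So $R$ is $2$-good if and only if $1 \in U(R)+U(R)$. Next I would pass to $\bar R = R/J(R)$ using Lemma \ref{2.10}, since units lift modulo $J(R)$. The condition $R = J(R) \cup (1+J(R))$ says precisely that $\bar R = \{\bar 0, \bar 1\}$, that is, $|\bar R| = 2$ and $\bar R \cong \mathbb{F}_2$. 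Thus both sides of the stated equivalence become assertions about whether $|R/J(R)| = 2$, which is the generalized analogue of the dichotomy $|R| = 2$ in Theorem \ref{2-good}(2).

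To compare the two conditions I would repeat the unit-producing argument of Theorem \ref{2-good}(2) in $\bar R$. If $\bar R \neq \{\bar 0,\bar 1\}$, pick $\bar r \notin \{\bar 0, \bar 1\}$. The decomposition from part (1) applied to $\bar r$ yields a unit $\bar v \neq \bar 1$ with $\bar 1 - \bar v \notin J(\bar R)$. A $\sqrt{J}$-fine decomposition $\bar 1 - \bar v = \bar w + \bar b$ then gives $\bar w + \bar v = \bar 1 - \bar b \in U(\bar R)$, and hence $\bar 1 \in U(\bar R)+U(\bar R)$. Lifting this back to $R$ shows $1 \in U(R)+U(R)$. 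In the complementary case $\bar R = \{\bar 0,\bar 1\}$, we have $U(\bar R)=\{\bar 1\}$, and the only possible sum of two units in $\bar R$ is $\bar 1 + \bar 1$.

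The main obstacle is matching these two cases to the direction of the biconditional exactly as the paper words it. By the reduction above, the equivalence holds once one knows which of the two cases corresponds to $1$ being a sum of two units. The case $\bar R = \{\bar 0, \bar 1\}$ is therefore the delicate one: there one must decide whether $\bar 1 = \bar 1 + \bar 1$, equivalently whether $2 \in 1 + J(R)$. I would settle this step by tracking the arithmetic of $\bar R \cong \mathbb{F}_2$ with the same convention the paper adopts for the two-element ring in Theorem \ref{2-good}(2). Checking this step carefully is where the proof will stand or fall: in $\mathbb{F}_2$ one has $\bar 1 + \bar 1 = \bar 0 \neq \bar 1$. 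So if this check fails under the paper's convention, the stated direction must be reconciled with Theorem \ref{2-good}(2) before the argument can be completed.
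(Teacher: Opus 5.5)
Your part (1) is correct and is essentially the paper's argument. The paper writes $1-r=u+a$ and $r=(-u)+(1-a)$, which also covers $r\in J(R)$, so your separate case is unnecessary but harmless.

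The real gap is in part (2). You stop exactly where your own computation already settles the matter, and you appeal to a ``convention'' that does not exist. If $R/J(R)\cong\mathbb{F}_2$, then every unit maps to $\bar 1$, so $U(R)=1+J(R)$ and $U(R)+U(R)\subseteq 2+J(R)=J(R)$. Hence $1\notin U(R)+U(R)$, and $R$ is not $2$-good. Combined with your (correct) reduction to whether $1\in U(R)+U(R)$, this shows that $R$ is $2$-good if and only if $R\neq J(R)\cup(1+J(R))$.

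So the biconditional as printed is false in both directions. $\mathbb{F}_2$ is generalized $\sqrt{J}$-fine and equals $J(\mathbb{F}_2)\cup(1+J(\mathbb{F}_2))$, but it is not $2$-good. $\mathbb{F}_3$ is $2$-good, yet $\mathbb{F}_3\neq\{0,1\}=J(\mathbb{F}_3)\cup(1+J(\mathbb{F}_3))$. You should say this outright and prove the corrected statement, which is the exact analogue of Theorem \ref{2-good}(2). That theorem, the one you appeal to, treats the two-element ring as \emph{not} $2$-good. The paper's own proof points the same way: its $(\Rightarrow)$ direction concludes $|R/J(R)|>2$, which is the negation of the printed condition, and its $(\Leftarrow)$ direction is only coherent under the hypothesis $R\neq J(R)\cup(1+J(R))$.

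One step of your argument for the corrected direction also needs repair. The part (1) decomposition of $\bar r\notin\{\bar 0,\bar 1\}$ need not contain a unit other than $\bar 1$. In $\mathbb{F}_3$ with $\bar r=\bar 2$, the only possibility is $\bar r-\bar 1=\bar 1+\bar 0$, so the decomposition is $\bar r=\bar 1+\bar 1$. Instead, take the $\sqrt{J}$-fine decomposition $\bar r=\bar z+\bar c$ of $\bar r$ itself. Either $\bar z\neq\bar 1$, and you set $\bar v=\bar z$; or $\bar r=\bar 1+\bar c$ is a unit different from $\bar 1$, and you set $\bar v=\bar r$.

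With such a $\bar v$, your step $\bar w+\bar v=\bar 1-\bar b\in U(\bar R)$ gives $\bar 1\in U(\bar R)+U(\bar R)$, and lifting units modulo $J(R)$ finishes the proof. This route is sounder than the paper's write-up of that direction. The paper only shows that units outside $1+J(R)$ are sums of two units, and then asserts $1+J(R)\subseteq U(R)+U(R)$, which does not follow.
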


\begin{proof}
(1) Take any \( r \in R \setminus (1 + J(R)) \). Then, \( 1 - r \not\in J(R) \). Since \( R \) is generalized \(\sqrt{J}\)-fine, we can write \( 1 - r = u + a \) with \( u \in U(R) \) and \( a \in \sqrt{J(R)} \). Hence,
	\[
	r = (-u) + (1 - a).
	\]
Now, \( -u \in U(R) \) and \( 1 - a \in U(R) \). So, \( r \in U(R) + U(R) \). Thus, \( R = (1 + J(R)) \cup (U(R) + U(R)) \).
	
(2) (\(\Rightarrow\)). Suppose \( R \) is 2-good. Then, \( R/J(R) \) is also 2-good, so \( |R/J(R)| > 2 \). Therefore, \( R = J(R) \cup (1 + J(R)) \).
	
(\(\Leftarrow\)). Assume \( R = J(R) \cup (1 + J(R)) \). We first intend to show that \( U(R) \neq 1 + J(R) \). To that target, suppose that \( U(R) = 1 + J(R) \). If \( r \in R \setminus J(R) \), then since \( R \) is generalized \(\sqrt{J}\)-fine, we can write \( r = v + a \) with \( v \in U(R) \) and \( a \in \sqrt{J(R)} \). But, we are assuming \( U(R) = 1 + J(R) \), so that \( v = 1 + j \) for some \( j \in J(R) \). Consequently,
\[
r = 1 + a + j\in U(R)+J(R)=U(R).
\]
Thus, \( r \in 1 + J(R) \), and so \( R = J(R) \cup (1 + J(R)) \). This, however, contradicts the assumption that \( U(R) \neq 1 + J(R) \). Therefore, we must have \( U(R) = 1 + J(R) \).

Now, take any \( u \in U(R) \setminus (1 + J(R)) \). Then, \( u - 1 \not\in J(R) \). Since \( R \) is generalized \(\sqrt{J}\)-fine, we can write \( u - 1 = v + a \) with \( v \in U(R) \) and \( a \in \sqrt{J(R)} \). Hence,
$
u = v + (1 + a).
$
Here \( v \in U(R) \) and \( 1 +a \in U(R) \). So, \( u \in U(R) + U(R) \). Therefore, every element of \( U(R) \setminus (1 + J(R)) \) lies in \( U(R) + U(R) \). In particular, \( 1 + J(R) \subseteq U(R) + U(R) \). Combining this with part (1), we get
$
R = U(R) + U(R),
$
as claimed.
\end{proof}

\begin{proposition}\label{2.5}
If \( R \) is a generalized \(\sqrt{J}\)-fine ring and \( \sqrt{J(R)} \subseteq J(R) \), then it is generalized fine.
\end{proposition}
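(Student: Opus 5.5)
The plan is to show directly that every element outside $J(R)$ is a unit. Once that is known, each such element has the trivial decomposition ``itself plus the nilpotent element $0$''. The argument follows Proposition \ref{2}(3), with generalized $\sqrt{J}$-fineness playing the role of $\sqrt{J}$-fineness.

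First, fix $r\in R\setminus J(R)$. By Definition \ref{maj2}, $r$ is $\sqrt{J}$-fine, so $r=u+a$ for some $u\in U(R)$ and $a\in\sqrt{J(R)}$. The hypothesis $\sqrt{J(R)}\subseteq J(R)$ gives $a\in J(R)$. Hence
\[
r=u+a\in U(R)+J(R)\subseteq U(R),
\]
by the standard inclusion $U(R)+J(R)=U(R)$ recalled in the introduction; explicitly, $u+a=u(1+u^{-1}a)$ and $1+u^{-1}a\in U(R)$.

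Second, since $r\in U(R)$ and $0\in \mathrm{Nil}(R)$, we have $r=r+0\in U(R)+\mathrm{Nil}(R)$. This proves $R\setminus J(R)\subseteq U(R)+\mathrm{Nil}(R)$, which is exactly Zhou's definition of a generalized fine ring.

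There is no genuine obstacle here. The only point needing care is that the decomposition produced by the hypothesis has its second summand in $J(R)$ rather than in $\mathrm{Nil}(R)$. We therefore do not try to keep that decomposition; we absorb the $J(R)$-part into the unit and use $0$ as the nilpotent summand. As a by-product, the argument shows $R\setminus J(R)\subseteq U(R)$, so $R$ is in fact local-like: $R\setminus J(R)=U(R)$.
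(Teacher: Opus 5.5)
Your argument is correct and is essentially the paper's proof: the $\sqrt{J(R)}$-summand lies in $J(R)$, so every $r\notin J(R)$ is a unit, and $r=r+0$ is then the required unit-plus-nilpotent decomposition. Your ``local-like'' observation $R\setminus J(R)\subseteq U(R)$ in fact says $R$ is local, which is exactly how the paper phrases the intermediate step.
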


\begin{proof}
Assume \( \sqrt{J(R)} \subseteq J(R) \), and take \( r \in R \setminus J(R) \). Since \( R \) is generalized \(\sqrt{J}\)-fine, we can write \( r = u + a \) for some \( u \in U(R) \) and \( a \in \sqrt{J(R)} \). Then, \( a \in J(R) \), so that \( r \in U(R) \). Thus, $R$ is local and, therefore, \( R \) is generalized fine, as asserted.
\end{proof}

\begin{proposition}\label{2.15}
The following three statements are equivalent for any ring \(R\):

(1) \(R\) is generalized \(\sqrt{J}\)-fine and \(JU\).

(2) \(R\) is generalized \(\sqrt{J}\)-fine and \(\sqrt{J}U\).

(3) \(R/J(R) \cong \mathbb{F}_2\).
\end{proposition}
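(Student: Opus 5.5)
I will prove the cycle $(1)\Rightarrow(2)\Rightarrow(3)\Rightarrow(1)$.

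\emph{Step $(1)\Rightarrow(2)$.} Every JU-ring is $\sqrt{J}U$. If $U(R)=1+J(R)$, then $1+J(R)\subseteq 1+\sqrt{J(R)}$. Conversely, for $a\in\sqrt{J(R)}$ the element $1+a$ is always a unit (as used in Theorem \ref{2-good}), so $1+\sqrt{J(R)}\subseteq U(R)=1+J(R)$. Hence $U(R)=1+\sqrt{J(R)}$, and the generalized $\sqrt{J}$-fine hypothesis carries over unchanged.

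\emph{Step $(2)\Rightarrow(3)$.} Take $r\in R\setminus J(R)$ and write $r=u+a$ with $u\in U(R)$ and $a\in\sqrt{J(R)}$. By the $\sqrt{J}U$ hypothesis, $u=1+b$ with $b\in\sqrt{J(R)}$, so $r=1+(a+b)$. Passing to $\bar R=R/J(R)$, and using $\sqrt{J(\bar R)}=\sqrt{J(R)}/J(R)$ from \cite[Lemma 2.1(5)]{DDE}, we see that $\sqrt{J(\bar R)}$ is exactly the set of nilpotent elements of $\bar R$. Thus every nonzero $\bar r$ has the form $1+\bar a+\bar b$ with $\bar a,\bar b$ nilpotent. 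This is the main obstacle: $\bar a+\bar b$ need not be nilpotent in general.

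To get around it, I would first show that $\bar R$ is reduced. Take a nilpotent $\bar x\neq 0$. Then $1+\bar x\in U(\bar R)$, and also $-\bar x\neq 0$, so $-\bar x=\bar v+\bar c$ with $\bar v$ a unit and $\bar c$ nilpotent. Since $\bar R$ is $\sqrt{J}U$ (units lift modulo $J$), we get $\bar v=1+\bar e$ with $\bar e$ nilpotent. I do not see an immediate contradiction from this.

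A cleaner route is to use the units directly. In $\bar R$, every nonzero $\bar r$ is a unit plus a nilpotent, and every unit is $1$ plus a nilpotent. Apply this to $\bar r=-1$. If $-1\neq 1$ in $\bar R$, then $-1$ is a unit, so $-1=1+n$ with $n$ nilpotent, giving $-2$ nilpotent. I would then aim to show that $\bar R$ is reduced, so that $2=0$ in $\bar R$.

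To obtain reducedness: if $\bar x^2=0$ and $\bar x\neq0$, consider the element $e_{11}$-like structure. I admit uncertainty here. The intended argument is likely the following. Since $\bar R$ is semiprimitive and every nonzero element is $1+\text{nil}+\text{nil}$, pass to a primitive image $\bar R/P$, which inherits the property because $\sqrt{J}$ of the quotient contains the image. By density, if $\bar R/P$ is not a division ring, it maps onto a $2\times2$ matrix ring over a division ring, where the matrix unit $e_{11}$ would be $1$ plus a nilpotent, hence invertible, a contradiction. So every primitive image is a division ring $D$ in which every nonzero element is $1+$nilpotent, i.e. equal to $1$. Hence $D=\mathbb F_2$, $\bar R$ is a subdirect product of copies of $\mathbb F_2$ (so Boolean), and its only unit is $1$. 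Generalized fineness then gives that every nonzero element of $\bar R$ is $1+0=1$, so $\bar R\cong\mathbb F_2$.

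\emph{Step $(3)\Rightarrow(1)$.} Assume $R/J(R)\cong\mathbb F_2$. Every $r\notin J(R)$ lies in $1+J(R)\subseteq U(R)$, so $r=r+0$ is a $\sqrt{J}$-fine decomposition and $R$ is generalized $\sqrt{J}$-fine. Also $U(R)$ maps to $U(\mathbb F_2)=\{1\}$, so $U(R)=1+J(R)$ and $R$ is JU.
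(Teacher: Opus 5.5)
Your steps $(1)\Rightarrow(2)$ and $(3)\Rightarrow(1)$ are correct. Your reduction in $(2)\Rightarrow(3)$ is also correct: in $\bar R=R/J(R)$ every nonzero element is a unit plus a nilpotent, and every unit is $1$ plus a nilpotent. The gap is in how you finish, and the density step fails on two counts.

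\begin{itemize}
\item A primitive ring that is not a division ring need not map onto any $M_2(D)$; for example, $M_3(D)$ is simple. Jacobson density only gives a \emph{subring} mapping onto $M_2(D)$. Your decomposition property does not descend to that subring, because the unit and nilpotent summands of its elements need not lie in it.
\item Even in a genuine quotient $M_2(D)$, the hypotheses only give $e_{11}=1+n_1+n_2$ with $n_1,n_2$ nilpotent, since the unit summand is the image of a unit of $\bar R$. They do not give $e_{11}=1+n$. This is exactly the obstacle you flagged earlier: $n_1+n_2$ need not be nilpotent, so no contradiction arises.
\end{itemize}

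So ``every primitive image is a division ring'' is unproved, and with it $\bar R\cong\mathbb{F}_2$.

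The missing idea is an elementary two-units trick. This is what the paper does: it applies the argument of Theorem \ref{2-good}(2) to the $\sqrt{J}$-fine ring $R/J(R)$ to write $1=u_1+u_2$ with $u_1,u_2$ units. The $\sqrt{J}U$ property then puts $u_1-1=-u_2$ in $U\cap\sqrt{J}$, which is empty. In your notation the argument runs as follows.

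\begin{itemize}
\item If $U(\bar R)=\{1\}$, then each nonzero $\bar r$ equals $1$ plus a nilpotent. Hence $\bar r$ is a unit, so $\bar r=1$, and $\bar R\cong\mathbb{F}_2$.
\item Otherwise choose a unit $v\neq 1$. Then $1-v\neq 0$, so $1-v=w+b$ with $w$ a unit and $b$ nilpotent.
\item Put $x:=w+v=1-b$. This is a unit, so $1=x^{-1}w+x^{-1}v$.
\item Since $x^{-1}w$ is a unit, $x^{-1}w=1+n$ with $n$ nilpotent. Then $x^{-1}v=-n$ is both a unit and nilpotent in the nonzero ring $\bar R$, a contradiction.
\end{itemize}

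No structure theory of primitive rings is needed.
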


\begin{proof}
Both (3) \(\Rightarrow\) (1) and (3) \(\Rightarrow\) (2) are straightforward.
	
(1) \(\Rightarrow\) (3). Assume \(R\) is both generalized \(\sqrt{J}\)-fine and \(JU\). Take any \(r \in R \setminus J(R)\). Then, we can write \(r = u + a\) with \(u \in U(R)\) and \(a \in \sqrt{J(R)}\). Since \(R\) is \(UJ\), we have \(U(R) = 1 + J(R)\), so \(u = 1 + j\) for some \(j \in J(R)\). Hence,
	\[
	r = 1 + a + j.
	\]
That is why, $r\in U(R)+J(R)=U(R)$. Thus, \(r \in 1 + J(R)\). Therefore, \(R = J(R) \cup (1 + J(R))\), so that \(R/J(R) \cong \mathbb{F}_2\).
	
(2) \(\Rightarrow\) (3). Assume \(R\) is both generalized \(\sqrt{J}\)-fine and \(U\sqrt{J}\). Then, Lemma \ref{2.8} enables us that \(R/J(R)\) is \(\sqrt{J}\)-fine and, moreover, it is \(U\sqrt{J}\) invoking \cite[Theoem 2.10]{DDE}. Set $R/J(R):=S$. Suppose, for contradiction, that \(|S| > 2\). In the proof of Theorem~\ref{2-good}(2) we established that, under these circumstances, one can write \(1 = u_1 + u_2\) for some units \(u_1, u_2 \in U(S)\). Since \(S\) is $U\sqrt{J}$, we have \(u_1 \in 1 + \sqrt{J(S)}\) or, equivalently, \(u_1 - 1 \in \sqrt{J(S)}\). But, \(u_1 - 1 = -u_2\) is also a unit. Thus, $u_1-1\in U(S)\cap \sqrt{J(S)}$ leading to a contradiction in view of \cite[Proposition 2.3]{DDE}. Therefore, \(|S| = 2\) and hence \(S \cong \mathbb{F}_2\), as stated.
\end{proof}

A valuable consequence is the following one.

\begin{corollary}\label{2.6}
Let \( R \) be a generalized \(\sqrt{J}\)-fine ring. Then, the following two assertions hold:
	
(1) If \( R \) is a UU-ring, then it is generalized fine.
	
(2) If \( R \) is a JU-ring, then it is generalized fine.
\end{corollary}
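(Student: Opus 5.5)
We prove both parts by showing that, under either hypothesis, every element outside $J(R)$ is already a unit. Then $R$ is local, and local rings are trivially generalized fine, since $r = r + 0$ with $0 \in Nil(R)$. This is the same mechanism as in Proposition \ref{2.5} and in the implication (1) $\Rightarrow$ (3) of Proposition \ref{2.15}.

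For (2), suppose $R$ is a JU-ring. Since a JU-ring is in particular $\sqrt{J}U$ (see \cite{SU}, \cite{DDE}), or directly from (1) $\Rightarrow$ (3) of Proposition \ref{2.15}, we get $R/J(R) \cong \mathbb{F}_2$. Then every $r \in R \setminus J(R)$ lies in $1 + J(R) \subseteq U(R)$. Concretely, write $r = u + a$ with $u = 1 + j \in U(R) = 1 + J(R)$. Then $r = 1 + (a + j)$, and $a + j \in \sqrt{J(R)}$ by \cite[Lemma 2.11(1)]{DDE}, so $r \in U(R)$. Hence $R \setminus J(R) \subseteq U(R) = U(R) + 0 \subseteq U(R) + Nil(R)$, as required.

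For (1), suppose $R$ is a UU-ring, so $U(R) = 1 + Nil(R) \subseteq 1 + \sqrt{J(R)}$. Hence $R$ is a $\sqrt{J}U$-ring; the reverse inclusion $1 + \sqrt{J(R)} \subseteq U(R)$ always holds by \cite[Lemma 2.1(3)]{DDE}. Now (2) $\Rightarrow$ (3) of Proposition \ref{2.15} gives $R/J(R) \cong \mathbb{F}_2$. As before, $R \setminus J(R) = 1 + J(R) \subseteq U(R) \subseteq U(R) + Nil(R)$, so $R$ is generalized fine. Alternatively, one could note that a UU-ring has $J(R)$ nil, so $1 + J(R) \subseteq 1 + Nil(R)$ and the units are $1 + Nil(R)$ regardless; but the route through Proposition \ref{2.15} is cleaner.

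The only point needing care is the inclusion $Nil(R) \subseteq \sqrt{J(R)}$, which makes every UU-ring a $\sqrt{J}U$-ring; it was already noted in the introduction. Everything else is a direct invocation of Proposition \ref{2.15}, so I expect no real obstacle beyond confirming that the hypotheses of that proposition are met verbatim.
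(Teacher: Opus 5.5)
Your proof is correct. Part (2) is essentially the paper's computation: you absorb $j$ into $a$ and use $1+\sqrt{J(R)}\subseteq U(R)$, whereas the paper first notes $\sqrt{J(R)}=J(R)$ and then invokes Proposition~\ref{2.5}.

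For part (1) you take a much heavier route than the paper. The paper observes that $1+a\in U(R)=1+Nil(R)$ for every $a\in\sqrt{J(R)}$. Hence $\sqrt{J(R)}\subseteq Nil(R)$, and every $\sqrt{J}$-fine decomposition $r=u+a$ of an element $r\notin J(R)$ already lies in $U(R)+Nil(R)$; nothing more is needed.

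You instead pass through Proposition~\ref{2.15}(2)$\Rightarrow$(3). Its proof draws on Lemma~\ref{2.8} (stated later in the paper, though not circularly), on the construction of $1=u_1+u_2$ from the proof of Theorem~\ref{2-good}, and on the fact that $U(\bar R)\cap\sqrt{J(\bar R)}=\emptyset$ for $\bar R=R/J(R)$.

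What your route buys is a strictly stronger conclusion: $R/J(R)\cong\mathbb{F}_2$. So $R$ is local, and $\sqrt{J(R)}=J(R)$ holds in case (1) as well. That is exactly the hypothesis of Proposition~\ref{2.5}, which the paper cites for (1) after verifying only $\sqrt{J(R)}\subseteq Nil(R)$. This is harmless there, since the conclusion follows directly.

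Your closing aside, that $J(R)$ is nil in a UU-ring, does not by itself give a proof. The genuinely direct alternative is the inclusion $\sqrt{J(R)}\subseteq Nil(R)$, and it is shorter, not less clean, than going through Proposition~\ref{2.15}.
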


\begin{proof}
(1) Assume \( R \) is simultaneously generalized \(\sqrt{J}\)-fine and UU. Take any \( a \in \sqrt{J(R)} \). Then, \( 1 + a \in U(R) = 1 + Nil(R) \), so \( a \in Nil(R) \). Hence, \( \sqrt{J(R)} \subseteq \mathrm{Nil}(R) \), and in virtue of Proposition \ref{2.5} the ring \( R \) is generalized fine.
	
(2) Now, assume \( R \) is simultaneously generalized \(\sqrt{J}\)-fine and JU. For any \( a \in \sqrt{J(R)} \), we have \( 1 + a \in U(R) = 1 + J(R) \), so \( a \in J(R) \). Thus, \( \sqrt{J(R)} = J(R) \), and again Proposition \ref{2.5} applies to get that \( R \) is generalized fine, as we want.
\end{proof}

\begin{proposition}\label{2.20}
Let \( R \) be a generalized \(\sqrt{J}\)-fine ring and \( e \in Id(R) \cap C(R) \). Then, the corner subring \( eRe \) is also generalized \(\sqrt{J}\)-fine.
\end{proposition}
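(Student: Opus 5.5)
Since $e$ is a central idempotent, we have the ring decomposition $R = eRe \times (1-e)R(1-e)$, with $eRe = eR$ and $(1-e)R(1-e) = (1-e)R$. The plan is to work directly with this splitting. We record three standard facts for a central idempotent:
\[
J(eRe)=eJ(R)e=eJ(R), \qquad U(eRe)=\{eu : u\in U(R)\}, \qquad \sqrt{J(eRe)}=e\sqrt{J(R)}.
\]
The first is classical. For the second: if $u\in U(R)$, then $eu\cdot eu^{-1}=e$, so $eu\in U(eRe)$; conversely, for $v\in U(eRe)$ the element $v+(1-e)$ is a unit of $R$ whose $e$-component is $v$. For the third: if $a\in\sqrt{J(R)}$ with $a^n\in J(R)$, then $(ea)^n=ea^n\in eJ(R)=J(eRe)$, by centrality of $e$; the reverse inclusion is handled the same way, taking $a=x$ for $x\in eRe$ with $x^n\in J(eRe)\subseteq J(R)$.

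Now let $x\in eRe\setminus J(eRe)$. Then $x\notin J(R)$, since otherwise $x=exe\in eJ(R)e=J(eRe)$. We could try to apply the hypothesis to $x$ directly and project: write $x=u+a$ with $u\in U(R)$ and $a\in\sqrt{J(R)}$, and multiply by $e$ to get $x=ex=eu+ea$. The facts above give $eu\in U(eRe)$ and $ea\in\sqrt{J(eRe)}$, which is the required $\sqrt{J}$-fine decomposition in $eRe$.

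The only step needing care is verifying the three facts, which all rest on $e$ being central so that multiplication by $e$ is a ring homomorphism $R\to eRe$ onto a direct factor. We expect no real obstacle; at most we need to remark that $eRe$ is nonzero when $e\neq 0$, and if $e=0$ the statement is vacuous since $J(0)=0$ and there are no elements outside the radical.
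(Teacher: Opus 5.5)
Your proof is correct and is essentially the paper's argument: you note $x\notin J(R)$, take a $\sqrt{J}$-fine decomposition $x=u+a$ in $R$, and multiply by the central idempotent $e$ to get $x=eu+ea$ with $eu\in U(eRe)$ and $ea\in\sqrt{J(eRe)}$. The only difference is that you verify the needed facts about $J(eRe)$, $U(eRe)$ and $\sqrt{J(eRe)}$ directly, whereas the paper quotes a lemma from the literature for $eae\in\sqrt{J(eRe)}$.
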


\begin{proof}
Take any \( r \in eRe \setminus J(eRe) \). Then, \( r \not\in J(R) \). Since \( R \) is generalized \(\sqrt{J}\)-fine, we can write \( r = u + a \) for some \( u \in U(R) \) and \( a \in \sqrt{J(R)} \). But,
\[
	r = ere = eue + eae,
\]
and using \cite[Lemma 2.8]{DDE} we extract \( eae \in \sqrt{J(eRe)} \). Also, because \( e \) is central, \( eue \in U(eRe) \). Finally, \( eRe \) is generalized \(\sqrt{J}\)-fine, as we desire.
\end{proof}

Recollect that a ring $R$ is \textit{abelian} whenever $Id(R)\subseteq C(R)$.

\begin{proposition}\label{2.25}
Every abelian generalized \(\sqrt{J}\)-fine ring is indecomposable.
\end{proposition}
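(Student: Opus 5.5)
We argue by contradiction: suppose $R$ is abelian and generalized $\sqrt{J}$-fine, but decomposable. Then there is an idempotent $e\in Id(R)$ with $e\neq 0,1$. Since $R$ is abelian, $e$ and $f:=1-e$ are central, and both corners $eRe=eR$ and $fRf=fR$ are non-zero rings.

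The element we decompose is $e$ itself. A non-zero idempotent never lies in $J(R)$: if $e\in J(R)$, then $1-e$ is a unit and an idempotent, so $1-e=1$, which forces $e=0$. Hence $e\in R\setminus J(R)$, and the hypothesis gives a $\sqrt{J}$-fine decomposition
\[
e=u+a,\qquad u\in U(R),\ a\in \sqrt{J(R)}.
\]
Multiplying by the central idempotent $f$ and using $fe=0$, we get
\[
0=fu+fa,\qquad\text{that is,}\qquad fu=-fa .
\]

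Now we pass to the corner $fRf$, exactly as in the proof of Proposition \ref{2.20}.
\begin{itemize}
\item Since $f$ is central, $fu$ is a unit of $fRf$, with inverse $fu^{-1}$.
\item By \cite[Lemma 2.8]{DDE}, $fa=faf\in\sqrt{J(fRf)}$, and hence $-fa\in\sqrt{J(fRf)}$ as well.
\end{itemize}
So $fu$ is simultaneously a unit of the non-zero ring $fRf$ and an element of $\sqrt{J(fRf)}$. This is impossible. Indeed, some power $(fu)^m$ would be a unit lying in $J(fRf)$, which cannot happen in a non-zero ring. Equivalently, we may invoke $U(S)\cap\sqrt{J(S)}=\emptyset$ for $S\neq 0$, which is \cite[Proposition 2.3]{DDE}. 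This contradiction shows that $R$ has no non-trivial idempotents, so $R$ is indecomposable.

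The only step needing care is the membership $fa\in\sqrt{J(fRf)}$, which uses $J(fRf)=fJ(R)f$ for a central idempotent $f$. Concretely, since $f$ is central, $(fa)^n=fa^n$, and if $a^n\in J(R)$ then $fa^n\in fJ(R)f=J(fRf)$. This is exactly the content of \cite[Lemma 2.8]{DDE}.
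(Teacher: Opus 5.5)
Your argument is correct, but it takes a different route from the paper's. The paper stays inside $R$. Since $e=u+a$ is central, $eu=ue$ forces $ua=au$, so $u^{-1}a\in\sqrt{J(R)}$ and $e=u(1+u^{-1}a)$ is a unit, whence $e=1$. This is the same computation as in Proposition~\ref{2}(1) and Lemma~\ref{2.8}. It needs only two facts: $\sqrt{J(R)}$ is stable under multiplication by commuting units, and $1+\sqrt{J(R)}\subseteq U(R)$.

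You instead multiply by $f=1-e$ and work in the corner $fRf$. This costs you the identification $J(fRf)=fJ(R)f$ and the fact $U(S)\cap\sqrt{J(S)}=\emptyset$ for $S\neq 0$. In return it makes the structural reason transparent. Under $R\cong eR\times fR$, the element $e$ corresponds to the pair $(e,0)$. Its second coordinate $0$ would then have to be a unit plus an element of $\sqrt{J}$ in the nonzero ring $fR$, which is impossible.

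Phrased this way, your argument shows directly that no product $S\times T$ of two nonzero rings is generalized $\sqrt{J}$-fine. That is the general phenomenon behind Example~\ref{direct}(1). Both proofs use abelianness only to make $e$ central. So each in fact shows that every generalized $\sqrt{J}$-fine ring has no nontrivial central idempotents, with no abelian hypothesis needed.
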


\begin{proof}
Given \( 0 \neq e \in R \) is a central idempotent. Then, \( e \not\in J(R) \), and so \( e = u + a \) for some \( u \in U(R) \) and \( a \in \sqrt{J(R)} \). Hence, \( e = u(1 + u^{-1}a) \). But, since \( e \) is central, the elements \( u \) and \( a \) commute, so that \( u^{-1}a \in \sqrt{J(R)} \) viewing \cite[Lemma2.1(1)]{DDE}. Thus, \( 1 + u^{-1}a \in U(R) \) employing \cite[Lemma2.1(3)]{DDE}, whence \( e \in U(R) \) and so \( e = 1 \). Therefore, \( R \) has no non-trivial central idempotents, and that is why it is indecomposable, as promised.
\end{proof}

We now come to the following extra machinery.

\begin{definition}\label{2.32}
A ring \( R \) is called \textit{strongly generalized \(\sqrt{J}\)-fine} if every element \( r \in R \setminus J(R) \) can be written as \( r = u + a \), where \( u \in U(R) \), \( a\in \sqrt{J(R)} \), and \( au = ua \). A ring \( R \) is called \textit{uniquely generalized \(\sqrt{J}\)-fine} if every element \( r \in R \setminus J(R) \) has a unique decomposition \( r = u + a \) with \( u \in U(R) \) and \( a \in \sqrt{J(R)} \).
\end{definition}

Our second main achievement states the following.

\begin{theorem}\label{2.33}
For an arbitrary ring \( R \), the following two claims are true:
	
(1) \( R \) is strongly generalized \(\sqrt{J}\)-fine if and only if \( R \) is local.
	
(2) \( R \) is uniquely generalized \(\sqrt{J}\)-fine if and only if \( R \) is a division ring.
\end{theorem}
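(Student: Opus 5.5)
For (1), the key fact is that a commuting sum of a unit and an element of $\sqrt{J(R)}$ is a unit. Suppose $r=u+a$ with $u\in U(R)$, $a\in\sqrt{J(R)}$ and $au=ua$. Then $u^{-1}$ commutes with $a$. Since $a^n\in J(R)$, we get $(u^{-1}a)^n=u^{-n}a^n\in J(R)$, so $u^{-1}a\in\sqrt{J(R)}$; this is \cite[Lemma 2.1(1)]{DDE}, used exactly as in Proposition \ref{2.25}. Hence $1+u^{-1}a\in U(R)$ by \cite[Lemma 2.1(3)]{DDE}, and therefore $r=u(1+u^{-1}a)\in U(R)$. So if $R$ is strongly generalized $\sqrt{J}$-fine, every element outside $J(R)$ is a unit, i.e. $R\setminus J(R)\subseteq U(R)$, which is the standard characterization of local rings. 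Conversely, if $R$ is local then every $r\notin J(R)$ is a unit, and $r=r+0$ is a commuting decomposition since $0\in\sqrt{J(R)}$. I expect no real difficulty here beyond citing the commutation lemma correctly.

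For (2), the ``if'' direction is immediate. In a division ring $J(R)=0$, and $\sqrt{J(R)}=\mathrm{Nil}(R)=0$. So every $r\neq0$ decomposes as $r=u+a$ only with $a=0$ and $u=r$, which gives uniqueness.

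For the ``only if'' direction, I would argue in three steps.
\begin{itemize}
\item[] First, $J(R)=0$. Take $r\notin J(R)$ with decomposition $r=u+a$, and pick $j\in J(R)$. Then $r=(u+j)+(a-j)$, where $u+j\in U(R)$ and $a-j\in\sqrt{J(R)}$ by \cite[Lemma 2.11(1)]{DDE}. Uniqueness forces $j=0$. Such an $r$ exists, namely $r=1$, assuming $R\neq0$ as the paper implicitly does.
\item[] Second, every nonzero $r$ is a unit. Since $J(R)=0$, $\sqrt{J(R)}=\mathrm{Nil}(R)$. Write $r=u+a$ with $a$ nilpotent. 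Uniqueness applied to $1=1+0$ shows $1$ has only the trivial decomposition, so for any nilpotent $b$ the unit $1-b$ must equal $1$, forcing $b=0$. Hence $\mathrm{Nil}(R)=0$, so $a=0$ and $r=u\in U(R)$.
\item[] Third, conclude: every nonzero element is a unit, so $R$ is a division ring.
\end{itemize}

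The main obstacle is keeping the bookkeeping of the perturbation step correct, namely that $\sqrt{J(R)}+J(R)\subseteq\sqrt{J(R)}$ and $U(R)+J(R)\subseteq U(R)$. The final nilpotent argument is simple once $1$'s decomposition is used.
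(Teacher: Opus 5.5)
Your proof is correct and follows essentially the paper's argument. Part (1) is identical to the paper's. In (2) the paper likewise applies uniqueness to a unit with a trivial decomposition: it writes $1+a=(1+a)+0$ to force $\sqrt{J(R)}=(0)$.

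Your first step, the perturbation by $j\in J(R)$, is redundant. Your uniqueness-at-$1$ argument applies verbatim to any $b\in\sqrt{J(R)}$, since $1-b\in U(R)$ by \cite[Lemma 2.1(3)]{DDE}. That already gives $\sqrt{J(R)}=(0)$, and hence $J(R)=(0)$.
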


\begin{proof}
(1) Suppose \( R \) is strongly generalized \(\sqrt{J}\)-fine. Take \( r \in R \setminus J(R) \). Then, \( r = u + a \) with \( u \in U(R) \), \( a \in \sqrt{J(R)} \), and \( ua = au \). Hence, one derives that \( ru^{-1} = 1 + au^{-1} \in U(R) \) in accordance with \cite[Lemma 2.1(1), (3)]{DDE}. So, \( r \in U(R) \). Thus, \( R = J(R) \cup U(R) \), so that \( R \) is local, indeed. 

Conversely, if \( R \) is local, then each element outside \( J(R) \) is a unit, so we may just write \( r = r + 0 \). Hence, \( R \) is strongly generalized \(\sqrt{J}\)-fine, as formulated.
	
(2) Suppose \( R \) is uniquely generalized \(\sqrt{J}\)-fine. First, we manage to show that \( \sqrt{J(R)} = (0) \). To that end, take \( a \in \sqrt{J(R)} \). Then, \( 1 + a \in U(R) \), and so \( 1 + a = (1 + a) + 0 \) has two decompositions. By uniqueness, \( a = 0 \). Hence, \( \sqrt{J(R)} = (0) \) giving automatically \( J(R) = (0) \). Now, choose any non-zero \( r \in R \). Thus, \( r \not\in J(R) \), so that \( r = u + a \) uniquely. Since \( a = 0 \), we arrive at \( r = u \in U(R) \). Therefore, any non-zero element is a unit, so \( R \) is indeed a division ring. 

Conversely, if \( R \) is a division ring, then every \( a \in R \setminus J(R) \) has the unique decomposition \( a = a + 0 \). Hence, \( R \) is uniquely generalized \(\sqrt{J}\)-fine, as formulated.
\end{proof}

The next commentaries are worthwhile.

\begin{example}\label{direct}
(1) The direct product of generalized \( \sqrt{J} \)-fine rings is \textit{not} necessarily generalized \( \sqrt{J} \)-fine. For example, \( \mathbb{Z}_2 \) is generalized \( \sqrt{J} \)-fine, but \( \mathbb{Z}_2 \times \mathbb{Z}_2 \) is obviously not.

(2) The upper triangular matrix ring \( T_2(\mathbb{Z}_2) \) is \textit{not} a generalized \(\sqrt{J}\)-fine ring. To see this, put
$
I := \{(a_{ij}) \in T_2(\mathbb{Z}_2) \mid a_{ii} = 0\}.
$
Then, one verifies that \[ I \subseteq J(T_2(\mathbb{Z}_2)) ~ {\rm and} ~  T_2(\mathbb{Z}_2)/I \cong \mathbb{Z}_2 \times \mathbb{Z}_2 .\] The result now follows immediately from a combination of (1) and Lemma \ref{2.10}.

(3) A subring of a generalized \(\sqrt{J}\)-fine ring need \textit{not} be generalized \(\sqrt{J}\)-fine. For instance, the matrix ring \(M_2(\mathbb{Z}_2)\) is generalized $\sqrt{J}$-fine owing to Lemma~\ref{2.4} alluded to below. But, as we checked above in point (2), the triangular matrix ring \(T_2(\mathbb{Z}_2)\) is not generalized $\sqrt{J}$-fine, despite being a subring of \(M_2(\mathbb{Z}_2)\).
\end{example}

\begin{theorem}\label{2.14}
Every generalized \(\sqrt{J}\)-fine ring is \(2\)-clean.
\end{theorem}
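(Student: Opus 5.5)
The plan is to reduce everything to Proposition \ref{2.13}(1), which says that a generalized $\sqrt{J}$-fine ring $R$ satisfies
\[
R = (1 + J(R)) \cup \bigl(U(R) + U(R)\bigr).
\]
Recall that $R$ is $2$-clean when every element can be written as $e + u_1 + u_2$ with $e \in Id(R)$ and $u_1, u_2 \in U(R)$. So it is enough to produce such a decomposition for each of the two pieces of this union separately.

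\textbf{First piece.} Let $r \in U(R) + U(R)$, say $r = u_1 + u_2$. Taking the idempotent $e = 0$ gives $r = 0 + u_1 + u_2$, which is already a $2$-clean decomposition.

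\textbf{Second piece.} Let $r \in 1 + J(R)$. Since $1 + J(R) \subseteq U(R)$, the element $r$ is a unit. Take the idempotent $e = 1$ and write
\[
r = 1 + r + (-1).
\]
Here $r$ and $-1$ are both units, so this is again a decomposition of the required form.

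Combining the two pieces, every element of $R$ is a sum of an idempotent and two units, so $R$ is $2$-clean. I do not expect a genuine obstacle: all the substance is carried by Proposition \ref{2.13}(1), which itself rests only on the facts that $1 + \sqrt{J(R)} \subseteq U(R)$ and $1 + J(R) \subseteq U(R)$. The one point to check is that no hypothesis on $|R|$, in the spirit of Theorem \ref{2-good}(2), is needed. None is, because the idempotent $1$ absorbs the $1 + J(R)$ part and the identity $r = 1 + r + (-1)$ holds in any ring.
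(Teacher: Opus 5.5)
Your proof is correct and is essentially the paper's argument in mirrored form. The paper applies the $\sqrt{J}$-fine decomposition to $r\notin J(R)$ itself, getting $r=u+(a-1)+1$ with idempotent $1$, and treats $r\in J(R)$ by hand as $r=1+(r-1)+0$; you, via Proposition \ref{2.13}(1), apply the decomposition to $1-r$ with idempotent $0$, and treat $1+J(R)$ by hand with idempotent $1$. The two versions are interchanged by the symmetry $r\mapsto 1-r$, $e\mapsto 1-e$, which preserves $2$-cleanness.
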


\begin{proof}
Let \( R \) be a generalized \(\sqrt{J}\)-fine ring. If \( r \in R \setminus J(R) \), then \( r = u + a \) for some \( u \in U(R) \) and \( a \in \sqrt{J(R)} \). Since \( a - 1 \) is a unit, we deduce \( r = u + (a - 1) + 1 \), which apparently is a \( 2 \)-clean decomposition of \( r \), as required. 

If, however, \( r \in J(R) \), then \( r = 1 + (r - 1) + 0 \), where \( 1, r - 1 \in U(R) \) and \( 0 \) is the trivial zero idempotent. Thus, every element of \( R \) is \( 2 \)-clean, and hence \( R \) is a \( 2 \)-clean ring.
\end{proof}

As a direct consequence, we have:

\begin{corollary}
Every fine ring is \(2\)-clean.
\end{corollary}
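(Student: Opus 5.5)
The plan is to reduce the corollary to Theorem~\ref{2.14} by showing that every fine ring is generalized $\sqrt{J}$-fine. Let $R$ be fine in the sense of C\u{a}lug\u{a}reanu--Lam, so that $R\setminus\{0\}\subseteq U(R)+Nil(R)$. First I will note that every fine ring is generalized fine: an element of $R\setminus J(R)$ is in particular non-zero, hence lies in $U(R)+Nil(R)$. Next I will use the inclusion $Nil(R)\subseteq\sqrt{J(R)}$ recorded in the introduction, which holds because a nilpotent power equals $0\in J(R)$. This gives $U(R)+Nil(R)\subseteq U(R)+\sqrt{J(R)}$. Consequently every element of $R\setminus J(R)$ is $\sqrt{J}$-fine, so $R$ is generalized $\sqrt{J}$-fine in the sense of Definition~\ref{maj2}. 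This is also the first sentence of Example~\ref{11}.

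With this in place, Theorem~\ref{2.14} applies directly and shows that $R$ is $2$-clean. For a more self-contained version, I would repeat that argument specialised to fine rings. If $r\in J(R)$, write $r=1+(r-1)+0$, where $r-1\in U(R)$ because $-1+J(R)\subseteq U(R)$. If $r\notin J(R)$, write $r=u+n$ with $n$ nilpotent, and then $r=u+(n-1)+1$, where $n-1\in U(R)$.

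No step here is a real obstacle. The only point needing care is the inclusion chain fine $\Rightarrow$ generalized fine $\Rightarrow$ generalized $\sqrt{J}$-fine, and it follows directly from the definitions.
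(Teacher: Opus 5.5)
Your proposal is correct and follows the paper's route. The paper also obtains the corollary from Theorem~\ref{2.14} via the inclusions fine $\subseteq$ generalized fine $\subseteq$ generalized $\sqrt{J}$-fine (Example~\ref{11}), using $Nil(R)\subseteq\sqrt{J(R)}$; your self-contained version is just the proof of Theorem~\ref{2.14} specialised to nilpotent summands, and it is sound.
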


We finish this section with the following construction.

\begin{example}\label{1}
Take \( R = \mathbb{Z}_2 \times \mathbb{Z}_2 \) and \( S = T_2(\mathbb{Z}_2) \). Both are \( 2 \)-clean, but a simple inspection shows that neither of them is generalized \( \sqrt{J} \)-fine. So, we may list the following proper inclusions:

\[
\{\text{fine rings}\} \subsetneqq \{\text{generalized fine}\} \subsetneqq \{\text{generalized $\sqrt{J}$-fine}\} \subsetneqq \{\text{2-clean rings}\}
\]
\end{example}

\section{Matrix and Group Rings}

In this section, we establish some crucial results about the structure of matrix and group rings under the new point of view as stated in Definitions~\ref{maj1} and \ref{maj2}, respectively. 

Our work here starts with the following pivotal statement.

\begin{proposition}\label{lemma2}
Let $R$ be a ring, and let $X =(a_{ij})$ be the upper triangular matrix in ${\rm M}_n(R)$. Then, $X \in \sqrt{J({\rm M}_n(R))}$ if and only if $a_{ii} \in \sqrt{J(R)}$ for all $1 \le i \le n$.
\end{proposition}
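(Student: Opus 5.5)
Since $J(\mathrm{M}_n(R))=\mathrm{M}_n(J(R))$, membership of a matrix $Y$ in $J(\mathrm{M}_n(R))$ means that every entry of $Y$ lies in $J(R)$. The plan is to compute the powers of the upper triangular matrix $X=(a_{ij})$ and compare their diagonal and off-diagonal parts. Upper triangular matrices form a subring $T_n(R)$, and the map $T_n(R)\to R^n$ sending a matrix to its diagonal is a ring homomorphism. Hence $X^m$ is upper triangular with diagonal entries $a_{11}^m,\dots,a_{nn}^m$.

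For ($\Rightarrow$), suppose $X^m\in J(\mathrm{M}_n(R))=\mathrm{M}_n(J(R))$. Then every entry of $X^m$ lies in $J(R)$, and in particular $a_{ii}^m\in J(R)$ for each $i$. Thus every $a_{ii}\in\sqrt{J(R)}$.

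For ($\Leftarrow$), choose $m_i$ with $a_{ii}^{m_i}\in J(R)$ and let $m=\max_i m_i$. Since $J(R)$ is an ideal, $a_{ii}^m\in J(R)$ for all $i$. Then $Y:=X^m$ is upper triangular with all diagonal entries in $J(R)$. The key step is to show that some power of such a $Y$ lies in $\mathrm{M}_n(J(R))$. Consider the ideal $I:=\{(b_{ij})\in T_n(R): b_{ii}\in J(R)\ \forall i\}$ of $T_n(R)$; it is the kernel of the composite $T_n(R)\to R^n\to (R/J(R))^n$. Let $K$ be the ideal of $T_n(R)$ consisting of upper triangular matrices whose diagonal entries lie in $J(R)$ and whose strictly upper entries lie in $J(R)$; that is, $K=T_n(J(R))$.

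We argue that $I^n\subseteq K$. Modulo $J(R)$, i.e.\ in $T_n(R/J(R))$, the image of $I$ consists of strictly upper triangular matrices, and any product of $n$ strictly upper triangular $n\times n$ matrices is zero. Hence every entry of a product of $n$ elements of $I$ lies in $J(R)$. In particular $Y^n=X^{mn}\in \mathrm{M}_n(J(R))=J(\mathrm{M}_n(R))$, so $X\in\sqrt{J(\mathrm{M}_n(R))}$.

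The only point needing care is this last reduction: we pass to $\mathrm{M}_n(R)\to \mathrm{M}_n(R/J(R))$, which is entrywise reduction and a ring homomorphism. There the image $\bar Y$ of $Y$ is strictly upper triangular, so $\bar Y^n=0$, i.e.\ $Y^n$ has all entries in $J(R)$. Besides this, the proof uses only the standard fact $J(\mathrm{M}_n(R))=\mathrm{M}_n(J(R))$ and the observation that the diagonal of a product of upper triangular matrices is the entrywise product of the diagonals.
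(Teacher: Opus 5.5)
Your proof is correct and follows essentially the same route as the paper: both pass to $X^m$, which is upper triangular with all diagonal entries in $J(R)$, and then show that a further power lies in $\mathrm{M}_n(J(R))=J(\mathrm{M}_n(R))$. The paper writes $X^m$ as a diagonal matrix in $J(\mathrm{M}_n(R))$ plus a strictly upper triangular nilpotent matrix and cites the general fact $J+\mathrm{Nil}\subseteq\sqrt{J}$; your entrywise reduction modulo $J(R)$ is exactly the proof of that fact, so your argument is self-contained and gives the explicit exponent $mn$.
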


\begin{proof}
Suppose $X \in \sqrt{J({\rm M}_n(R))}$. Then, there exists $s \in \mathbb{N}$ such that $$X^s \in J({\rm M}_n(R))={\rm M}_n(J(R)).$$ This insures that $a^s_{ii} \in J(R)$ for all $1 \le i \le n$, whence $a_{ii} \in \sqrt{J(R)}$ for all $1 \le i \le n$.
	
If, however, $a_{ii} \in \sqrt{J(R)}$ for all $1 \le i \le n$, then, for each $1 \le i \le n$, there is $s_i \in \mathbb{N}$ such that $a^{s_i}_{ii} \in J(R)$. Setting $t:=\max\{s_i\mid 1 \le i \le n\}$, we observe that $a^t_{ii} \in J(R)$ for all $1 \le i \le n$. Now, we calculate
\[X^t = \begin{pNiceArray}{cccc}
		a^t_{11} &  & \Block{2-2}<\Large>{\mathbf{\ast}} \\
		& a^t_{22} \\
		\Block{2-2}<\Large>{0} && \ddots &  \\
		&&  & a^t_{nn}
	\end{pNiceArray}
	= \begin{pNiceArray}{cccc}
		a^t_{11} &  & \Block{2-2}<\Large>{0} \\
		& a^t_{22} \\
		\Block{2-2}<\Large>{0} && \ddots &  \\
		&&  & a^t_{nn}
	\end{pNiceArray}
	+ \begin{pNiceArray}{cccc}
		0 &  & \Block{2-2}<\Large>{\mathbf{\ast}} \\
		& 0 \\
		\Block{2-2}<\Large>{0} && \ddots &  \\
		&&  & 0
	\end{pNiceArray}.
\]
We now inspect that the first right hand side matrix belongs to $J({\rm M}_n(R))$, while the second one is nilpotent. Consequently, applying \cite[Lemma 2.1(8)]{DDE}, we deduce that $X^t \in \sqrt{J({\rm M}_n(R))}$, and hence $X \in \sqrt{J({\rm M}_n(R))}$, as we need.
\end{proof}

\begin{proposition}\label{lemma4}
Let $R$ be a ring, and let $A = \begin{pmatrix} X & C \\ 0 & Y \end{pmatrix}$ be a block matrix, where $X \in \sqrt{J({\rm M}_s(R))}$ and $Y \in \sqrt{J({\rm M}_t(R))}$. Then, $A \in \sqrt{J({\rm M}_{s+t}(R))}$.
\end{proposition}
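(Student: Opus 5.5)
The plan mirrors the proof of Proposition \ref{lemma2}: pick a power of $A$ whose diagonal blocks lie in the Jacobson radical, split that power into a block-diagonal part in $J$ plus a strictly block upper triangular part, and conclude with \cite[Lemma 2.1(8)]{DDE}.

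First, since $X\in\sqrt{J({\rm M}_s(R))}$ and $Y\in\sqrt{J({\rm M}_t(R))}$, there are $p,q\in\mathbb{N}$ with $X^p\in J({\rm M}_s(R))={\rm M}_s(J(R))$ and $Y^q\in {\rm M}_t(J(R))$. Put $m:=\max\{p,q\}$. Because $J$ is an ideal, we still have $X^m\in {\rm M}_s(J(R))$ and $Y^m\in{\rm M}_t(J(R))$. A direct induction on block upper triangular products gives
\[
A^m=\begin{pmatrix} X^m & C_m \\ 0 & Y^m\end{pmatrix}, \qquad C_m=\sum_{i+j=m-1}X^iCY^j .
\]
The exact form of $C_m$ is irrelevant; we only need the zero lower-left block.

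Next, decompose
\[
A^m=\begin{pmatrix} X^m & 0 \\ 0 & Y^m\end{pmatrix}+\begin{pmatrix} 0 & C_m \\ 0 & 0\end{pmatrix}.
\]
The first summand has all entries in $J(R)$, so it lies in ${\rm M}_{s+t}(J(R))=J({\rm M}_{s+t}(R))$. The second summand squares to zero. By the same fact used in Proposition \ref{lemma2}, \cite[Lemma 2.1(8)]{DDE}, the sum of an element of $J$ and a nilpotent lies in $\sqrt{J}$, so $A^m\in\sqrt{J({\rm M}_{s+t}(R))}$. Then $(A^m)^k\in J$ for some $k$, hence $A^{mk}\in J({\rm M}_{s+t}(R))$ and $A\in \sqrt{J({\rm M}_{s+t}(R))}$.

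The only delicate point is whether \cite[Lemma 2.1(8)]{DDE} really applies without any commutation hypothesis, since it was invoked the same way in Proposition \ref{lemma2}. If it requires more, I would instead square $A^m$ directly. Write $A^m=D+N$ with $D$ block diagonal with entries in $J(R)$ and $N^2=0$. Then $ND$ and $DN$ are block strictly upper triangular and $NDN=0$, so every term of $(D+N)^2$ other than $N^2=0$ involves $D$. Hence $(D+N)^2\in {\rm M}_{s+t}(J(R))$, because $J({\rm M}_{s+t}(R))$ is an ideal and $D$ lies in it. This bypasses the lemma entirely.
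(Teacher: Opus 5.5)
Your proof is correct and is essentially the paper's argument. You choose the larger exponent $m$, split $A^m$ into a block-diagonal matrix lying in $J({\rm M}_{s+t}(R))$ plus a square-zero strictly block upper triangular matrix, and apply \cite[Lemma 2.1(8)]{DDE}. Your fallback computation $(D+N)^2=D^2+DN+ND\in J({\rm M}_{s+t}(R))$ is also valid and removes the reliance on the cited lemma (the remark $NDN=0$ is not needed). You also make explicit the step from $A^m\in\sqrt{J({\rm M}_{s+t}(R))}$ to $A\in\sqrt{J({\rm M}_{s+t}(R))}$, which the paper leaves implicit.
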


\begin{proof}
Since $X \in \sqrt{J({\rm M}_s(R))}$ and $Y \in \sqrt{J({\rm M}_t(R))}$, there are $k_1,k_2 \in \mathbb{N}$ such that $X^{k_1} \in J({\rm M}_s(R))$ and $Y^{k_2} \in J({\rm M}_t(R))$. Set $k := \max\{k_1,k_2\}$. Then, one computes that
\[
	A^k = \begin{pmatrix} X^k & 0 \\ 0 & Y^k \end{pmatrix} + \begin{pmatrix} 0 & * \\ 0 & 0 \end{pmatrix} \in J({\rm M}_{s+t}(R)) + Nil({\rm M}_{s+t}(R)).
\]
Therefore, \cite[Lemma 2.1(8)]{DDE} leads to $A \in \sqrt{J({\rm M}_{s+t}(R))}$, as desired.
\end{proof}

\begin{proposition}\label{block}
Let
$
	M = \begin{pmatrix} X & C \\ D & Y \end{pmatrix} \in R = M_n(S),
$
where \(X \in M_k(S)\) and \(Y \in M_{n-k}(S)\) with \(1 \le k \le n-1\).
If \(X \in \sqrt{J}F(M_k(S))\) and \(Y \in \sqrt{J}F(M_{n-k}(S))\), then \(M \in \sqrt{J}F(R)\).
\end{proposition}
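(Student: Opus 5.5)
Write $X = U_1 + A_1$ with $U_1 \in U(M_k(S))$ and $A_1 \in \sqrt{J(M_k(S))}$. Likewise write $Y = U_2 + A_2$ with $U_2 \in U(M_{n-k}(S))$ and $A_2 \in \sqrt{J(M_{n-k}(S))}$. The plan is to absorb the off-diagonal blocks $C$ and $D$ into a block-triangular decomposition, so that Proposition \ref{lemma4} controls the $\sqrt{J}$-part and the unit part is a product of block-triangular units.

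The first natural attempt is
\[
M=\begin{pmatrix} U_1 & C\\ D & U_2+DU_1^{-1}C\end{pmatrix}+\begin{pmatrix} A_1 & 0\\ 0 & A_2-DU_1^{-1}C\end{pmatrix},
\]
but the lower-right correction term is not controlled. So I will rebalance differently: put all of $D$ into the unit and keep $C$ in the $\sqrt{J}$-part. Concretely, I will take
\[
V=\begin{pmatrix} U_1 & 0\\ D & U_2\end{pmatrix},\qquad A=\begin{pmatrix} A_1 & C\\ 0 & A_2\end{pmatrix},
\]
so that $M=V+A$. The matrix $V$ is block lower triangular with invertible diagonal blocks. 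Hence it is a unit, with inverse
\[
\begin{pmatrix} U_1^{-1} & 0\\ -U_2^{-1}DU_1^{-1} & U_2^{-1}\end{pmatrix}.
\]
The matrix $A$ is block upper triangular with diagonal blocks $A_1\in\sqrt{J(M_k(S))}$ and $A_2\in\sqrt{J(M_{n-k}(S))}$. Therefore Proposition \ref{lemma4}, with $s=k$ and $t=n-k$, gives directly $A\in\sqrt{J(M_n(S))}$.

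This already yields a $\sqrt{J}$-fine decomposition of $M$. It remains to check that $M\neq 0$, as Definition \ref{maj1} requires for fine elements. This holds because $X\neq 0$, since $X$ is itself $\sqrt{J}$-fine.

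The only step needing care is the invertibility of block-triangular matrices with unit diagonal blocks, which is a one-line verification. The potential obstacle, namely mixing $C$ and $D$ into a single block, is avoided by this triangular split, so no genuine difficulty arises.
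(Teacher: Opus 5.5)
Your proof is correct and is exactly the paper's argument: the same splitting $M=\begin{pmatrix} U_1 & 0\\ D & U_2\end{pmatrix}+\begin{pmatrix} A_1 & C\\ 0 & A_2\end{pmatrix}$, with the first summand a unit because it is block lower triangular with invertible diagonal blocks, and Proposition \ref{lemma4} placing the second summand in $\sqrt{J(M_n(S))}$. Your extra check that $M\neq 0$ (because $X\neq 0$) is a valid detail that the paper leaves implicit.
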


\begin{proof}
Since \(X\) and \(Y\) are $\sqrt{J}$-fine, we can write
	\[
	X = U + A \quad \text{and} \quad Y = U' + A'
	\]
for some \(U \in U(M_k(S))\), \(U' \in U(M_{n-k}(S))\) and \(A \in \sqrt{J(M_k(S))}\), \(A' \in \sqrt{J(M_{n-k}(S))}\).
	
Now, consider the decomposition

\[
M = \begin{pmatrix} U & 0 \\ D & U' \end{pmatrix} + \begin{pmatrix} A & C \\ 0 & A' \end{pmatrix}.
\qquad\qquad\qquad\qquad\qquad (*)
\]
	
As the first matrix on the right-hand side is block triangular with invertible diagonal blocks, it is invertible in \(R\). The second matrix is in $\sqrt{J(M_n(S))}$ thanking to Proposition \ref{lemma4}. Thus, $(*)$ is really a $\sqrt{J}$-fine decomposition of \(M\) in \(R\), proving that \(M \in \sqrt{J}F(R)\), as it should be.
\end{proof}

An application of Proposition \ref{block} inductively on the number of diagonal blocks is a guarantor of validity of the following result.

\begin{corollary}\label{Block}
Let \( M = (A_{ij}) \in R = M_n(S) \) be a block matrix with each diagonal block \( A_{ii} \) \(\sqrt{J}\)-fine. Then, \( M \) belongs to \(\sqrt{J}F(R)\).
\end{corollary}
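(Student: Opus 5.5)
The plan is an induction on the number $m$ of diagonal blocks, with Proposition~\ref{block} doing all of the work. Write $M=(A_{ij})_{1\le i,j\le m}$, where $A_{ii}\in M_{k_i}(S)$ and $k_1+\cdots+k_m=n$. For $m=1$ there is nothing to prove, since $M=A_{11}$ is $\sqrt{J}$-fine by hypothesis. For $m=2$ the claim is exactly Proposition~\ref{block} with $k=k_1$.

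For the inductive step, let $m\ge 3$ and assume the statement for all block matrices with $m-1$ diagonal blocks, over any matrix ring $M_{n'}(S)$. Regroup $M$ as a $2\times 2$ block matrix
\[
M=\begin{pmatrix} X & C\\ D & Y\end{pmatrix},
\]
where $X=(A_{ij})_{1\le i,j\le m-1}\in M_{k}(S)$ with $k=k_1+\cdots+k_{m-1}$, and $Y=A_{mm}\in M_{k_m}(S)$. The blocks $C$ and $D$ collect the remaining entries $A_{im}$ and $A_{mj}$. By construction $1\le k\le n-1$.

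The diagonal blocks of $X$ are $A_{11},\dots,A_{m-1,m-1}$, and each is $\sqrt{J}$-fine. The induction hypothesis, applied in the ring $M_k(S)$, therefore gives $X\in\sqrt{J}F(M_k(S))$. Also $Y=A_{mm}\in\sqrt{J}F(M_{k_m}(S))$ by assumption. Proposition~\ref{block} now yields $M\in\sqrt{J}F(M_n(S))$, which completes the induction.

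There is no real obstacle here. The only point needing care is bookkeeping: the induction hypothesis must be stated over an arbitrary matrix size, because it is applied in $M_k(S)$ rather than $M_n(S)$. One should also read ``$A_{ii}$ is $\sqrt{J}$-fine'' as $A_{ii}\in\sqrt{J}F(M_{k_i}(S))$, meaning $A_{ii}$ is a unit plus an element of $\sqrt{J(M_{k_i}(S))}$. The nonzero requirement in Definition~\ref{maj1} is harmless. The inductive argument only ever uses the unit-plus-$\sqrt{J}$ decomposition. Moreover, if some diagonal block is nonzero then $M\neq 0$, so the output of Proposition~\ref{block} is a genuine element of $\sqrt{J}F(R)$.
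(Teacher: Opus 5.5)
Your proposal is correct and follows the paper's own argument, which justifies Corollary~\ref{Block} by applying Proposition~\ref{block} inductively on the number of diagonal blocks. Grouping the first $m-1$ diagonal blocks into $X$ and taking $Y=A_{mm}$ is that argument written out, and your remarks (running the induction over arbitrary sizes $M_k(S)$, checking $1\le k\le n-1$, noting that the nonzero requirement is automatic) make explicit what the paper leaves implicit.
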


The next assertion is critical for the successful establishment of the third major result quoted in the sequel.

\begin{lemma}\label{2.4}
If \( R \) is a generalized $\sqrt{J}$-fine ring, then \( M_2(R) \) is also a generalized $\sqrt{J}$-fine ring.
\end{lemma}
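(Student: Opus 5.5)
The plan is to reduce first to the case $J(R)=0$ and then run a case analysis on the diagonal entries, using conjugation to make both of them non-zero. Since $J(M_2(R))=M_2(J(R))$ and $M_2(R)/M_2(J(R))\cong M_2(R/J(R))$, Lemma \ref{2.10} (applied with $I=M_2(J(R))$) shows that it suffices to prove that $M_2(\bar R)$ is generalized $\sqrt{J}$-fine, where $\bar R:=R/J(R)$. Lemma \ref{2.10} (applied with $I=J(R)$) also shows that $\bar R$ is itself generalized $\sqrt{J}$-fine. So we may assume from the start that $J(R)=0$.

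In that case $\sqrt{J(R)}=Nil(R)$ and $J(M_2(R))=0$. The hypothesis then says exactly that every non-zero element of $R$ is $\sqrt{J}$-fine, i.e., $R$ is fine. What we must show is that every non-zero $M\in M_2(R)$ lies in $\sqrt{J}F(M_2(R))$.

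Write $M=\begin{pmatrix} a & b\\ c & d\end{pmatrix}\neq 0$. The key tool is Proposition \ref{block} with $k=1$: if $a\neq 0$ and $d\neq 0$, then $a,d\in\sqrt{J}F(R)$, and so $M\in\sqrt{J}F(M_2(R))$. Note that $0$ is never $\sqrt{J}$-fine, because $0=u+x$ would force the unit $-u$ into $\sqrt{J(R)}$. Hence the whole task is to reduce to the situation where both diagonal entries are non-zero. For this we use Proposition \ref{2}(2): $\sqrt{J}F(M_2(R))$ is stable under conjugation. It is therefore enough to find an invertible $E$ such that $EME^{-1}$ has two non-zero diagonal entries. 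We take $E$ to be an elementary matrix $\begin{pmatrix}1&0\\ x&1\end{pmatrix}$ or $\begin{pmatrix}1&x\\ 0&1\end{pmatrix}$, or the swap matrix $\begin{pmatrix}0&1\\1&0\end{pmatrix}$.

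The case analysis runs as follows.
\begin{itemize}
\item If $a=d=0$ and $b\neq 0$, conjugating by $\begin{pmatrix}1&0\\1&1\end{pmatrix}$ gives the matrix $\begin{pmatrix}-b&b\\ c-b&b\end{pmatrix}$, which has non-zero diagonal.
\item If $a=d=b=0$, then $c\neq 0$, and the transposed elementary matrix $\begin{pmatrix}1&1\\0&1\end{pmatrix}$ works symmetrically.
\item If exactly one diagonal entry vanishes, the swap conjugation lets us assume $a=0\neq d$. Conjugation by $\begin{pmatrix}1&0\\ x&1\end{pmatrix}$ produces diagonal entries $-bx$ and $d+xb$, and with the upper elementary matrix the analogous entries are $xc$ and $d-cx$. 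We then try $x=\pm1$ in both versions.
\end{itemize}

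The main obstacle is this last sub-case, where all of these choices can fail simultaneously. This happens, for instance, when $2=0$, $b=c=d$, or when $b=c=0$. If $b=c=0$ the matrix is $\operatorname{diag}(0,d)$; here I would decompose it directly. Write $d=u+n$ with $u\in U(R)$ and $n\in Nil(R)$, and use
\[
\begin{pmatrix}0&0\\0&d\end{pmatrix}=\begin{pmatrix}1&1\\ 1&u+1\end{pmatrix}+\begin{pmatrix}-1&-1\\-1&n-1\end{pmatrix}.
\]
The first summand is invertible via its elementary factorization (its Schur complement is $u$). For the second summand, I would arrange instead a splitting into an invertible matrix plus a nilpotent one of the form $\begin{pmatrix}0&*\\0&0\end{pmatrix}$ or its transpose, adjusted suitably. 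The remaining degenerate configurations, such as $b=c=d$ in characteristic $2$, would be handled in the same spirit. There I would use a conjugation by $\begin{pmatrix}1&y\\0&1\end{pmatrix}$ with $y$ a unit chosen from a fine decomposition of $d$. Failing that, I would follow the Călugăreanu--Lam argument that $M_2$ of a fine ring is fine, which is exactly the statement needed after the reduction to $J(R)=0$.
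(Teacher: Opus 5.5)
Your reduction to $J(R)=0$ is correct. Lemma \ref{2.10}, applied to $I=J(R)$ and to $I=M_2(J(R))=J(M_2(R))$, turns the lemma into exactly the statement that $M_2$ of a fine ring is fine. That is the main theorem of C\u{a}lug\u{a}reanu--Lam. If you cite it as a theorem instead of holding it as a fallback, you get a complete proof that is much shorter than the paper's, and in fact all of Theorem \ref{22} at once.

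The self-contained argument you actually sketch, however, has a genuine gap, and it sits exactly at the sub-case you flag. The trouble is not a bad choice of $x$ or $y$: the strategy of conjugating until both diagonal entries are non-zero cannot work there at all. Take $R=\mathbb{F}_2$. The matrix $\operatorname{diag}(0,1)$ has trace $1$, and trace is conjugation-invariant, so every conjugate has diagonal $(1,0)$ or $(0,1)$. Hence Proposition \ref{block} with $k=1$ never applies to it. Your direct decomposition of $\operatorname{diag}(0,d)$ also fails. For $R=\mathbb{R}$ and $d=1$ (so $u=1$, $n=0$), the second summand is $\begin{pmatrix}-1&-1\\-1&-1\end{pmatrix}$, whose square is $\begin{pmatrix}2&2\\2&2\end{pmatrix}$, so it is not nilpotent. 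The repair ``adjusted suitably'' is never supplied.

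The missing idea is a decomposition that places units \emph{off} the diagonal and uses a $\sqrt{J}$-part whose square is diagonal. The paper does exactly this, with $\begin{pmatrix}a&u\\-1&0\end{pmatrix}+\begin{pmatrix}0&x\\1&0\end{pmatrix}$ and $\begin{pmatrix}0&u\\u&c\end{pmatrix}+\begin{pmatrix}0&x\\x&0\end{pmatrix}$, working modulo $M_2(J(R))$ rather than in $R/J(R)$. In your setting, write $d=u+n$ with $u\in U(R)$ and $n\in Nil(R)$. Then
\begin{gather*}
\begin{pmatrix}0&d\\0&d\end{pmatrix}=\begin{pmatrix}0&u\\-1&d\end{pmatrix}+\begin{pmatrix}0&n\\1&0\end{pmatrix},\\
\begin{pmatrix}0&0\\d&d\end{pmatrix}=\begin{pmatrix}0&-1\\u&d\end{pmatrix}+\begin{pmatrix}0&1\\n&0\end{pmatrix},\\
\begin{pmatrix}0&d\\d&d\end{pmatrix}=\begin{pmatrix}0&u\\u&d\end{pmatrix}+\begin{pmatrix}0&n\\n&0\end{pmatrix}.
\end{gather*}
In each line the first summand is invertible; for instance, $\begin{pmatrix}0&u\\-1&d\end{pmatrix}^{-1}=\begin{pmatrix}du^{-1}&-1\\u^{-1}&0\end{pmatrix}$. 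The second summand squares to $\operatorname{diag}(n,n)$ or $\operatorname{diag}(n^2,n^2)$, so it is nilpotent.

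It remains to place $\operatorname{diag}(0,d)$: conjugation by $\begin{pmatrix}1&1\\0&1\end{pmatrix}$ sends it to $\begin{pmatrix}0&d\\0&d\end{pmatrix}$. Your choices $x=\pm1$ fail precisely for $\operatorname{diag}(0,d)$ and, when $2d=0$, for the three matrices displayed above. These identities therefore close your case analysis.
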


\begin{proof}
Let
	\[
	A := \begin{pmatrix} a & b \\ c & d \end{pmatrix} \in M_2(R) \setminus M_2(J(R)).
	\]
We consider four possible cases based on how many entries of \( A \) are outside \( J(R) \).

\medskip
	
\textbf{Case 1: All four entries are not in \( J(R) \).} Then, both \( a \) and \( d \) are $\sqrt{J}$-fine in \( R \). By Corollary \ref{Block}, \( A \) is $\sqrt{J}$-fine.

\medskip
	
\textbf{Case 2: Exactly three entries are not in \( J(R) \).} So, exactly one entry is in \( J(R) \).

\medskip
	
If \( a \in J(R) \), then \( A \) is $\sqrt{J}$-fine if and only if \( \begin{pmatrix} 0 & b \\ c & d \end{pmatrix} \) is $\sqrt{J}$-fine. So, we may assume \( a = 0 \).
Write
	\[
	A_1 = \begin{pmatrix} 0 & 1 \\ 1 & 1 \end{pmatrix} \begin{pmatrix} 0 & b \\ c & d \end{pmatrix} \begin{pmatrix} -1 & 1 \\ 1 & 0 \end{pmatrix}
	= \begin{pmatrix} -c+d & c \\ b-c+d & c \end{pmatrix}.
	\]
If \( -c+d \not\in J(R) \), then \( A_1 \) and hence \( A \) is $\sqrt{J}$-fine by virtue of Corollary \ref{Block} and Proposition \ref{2}(2). So, assume \( -c+d \in J(R) \). Then, \( A \) is $\sqrt{J}$-fine if and only if \( A_1 \) is $\sqrt{J}$-fine, which amounts to \( \begin{pmatrix} 0 & c \\ b & c \end{pmatrix} \) being $\sqrt{J}$-fine. So, we may assume
$
	A_1 = \begin{pmatrix} 0 & c \\ b & c \end{pmatrix}.
$
Now, let
	\[
	A_2 := \begin{pmatrix} 1 & 1 \\ 0 & 1 \end{pmatrix} \begin{pmatrix} 0 & c \\ b & c \end{pmatrix} \begin{pmatrix} 1 & -1 \\ 0 & 1 \end{pmatrix}
	= \begin{pmatrix} b & -b+2c \\ b & -b+c \end{pmatrix}.
	\]
If \( -b+c \not\in J(R) \), then \( A_2 \) and hence \( A_1 \) is $\sqrt{J}$-fine by usage of Corollary \ref{Block} and Proposition \ref{2}(2). So, assume \( -b+c \in J(R) \), i.e., \( b = c + j \) for some \( j \in J(R) \). Thus, \( A_1 \) is $\sqrt{J}$-fine if and only if \( \begin{pmatrix} 0 & c \\ c & c \end{pmatrix} \) is $\sqrt{J}$-fine. Since \( c \not\in J(R) \), we can write \( c = u + x \) with \( u \in U(R) \) and \( x \in \sqrt{J(R)} \). Therefore,
	\[
	\begin{pmatrix} 0 & c \\ c & c \end{pmatrix}
	= \begin{pmatrix} 0 & u \\ u & c \end{pmatrix} + \begin{pmatrix} 0 & x \\ x & 0 \end{pmatrix},
	\]
which is a unit plus an element from $\sqrt{J}$. So, \( A \) is $\sqrt{J}$-fine.
	
If \( b \in J(R) \), then \( a \) and \( d \) are $\sqrt{J}$-fine in \( R \), so \( A \) is $\sqrt{J}$-fine by Corollary \ref{Block}.
	
The cases \( c \in J(R) \) or \( d \in J(R) \) are quite similar, so we skip their evidences.

\medskip
	
\textbf{Case 3: Exactly two entries are not in \( J(R) \).}

\medskip
	
If \( a \not\in J(R) \) and \( d \not\in J(R) \), then both \( a \) and \( d \) are $\sqrt{J}$-fine, so \( A \) is $\sqrt{J}$-fine by Corollary \ref{Block}.
	
If \( a \not\in J(R) \) and \( b \not\in J(R) \), then \( b \) is $\sqrt{J}$-fine in \( R \), and \( A \) is $\sqrt{J}$-fine if and only if \( \begin{pmatrix} a & b \\ 0 & 0 \end{pmatrix} \) is $\sqrt{J}$-fine. Write \( b = u + x \) with \( u \in U(R) \) and \( x \in \sqrt{J(R)} \). Thus,
	\[
	\begin{pmatrix} a & b \\ 0 & 0 \end{pmatrix}
	= \begin{pmatrix} a & u \\ -1 & 0 \end{pmatrix} + \begin{pmatrix} 0 & x \\ 1 & 0 \end{pmatrix},
	\]
is a sum of a unit and an element from $\sqrt{J}$. So, \( A \) is $\sqrt{J}$-fine.
	
If \( b \not\in J(R) \) and \( c \not\in J(R) \), then \( A \) is $\sqrt{J}$-fine if and only if \( \begin{pmatrix} 0 & b \\ c & 0 \end{pmatrix} \) is $\sqrt{J}$-fine. But,
	\[
	\begin{pmatrix} 0 & 1 \\ 1 & 1 \end{pmatrix} \begin{pmatrix} 0 & b \\ c & 0 \end{pmatrix} \begin{pmatrix} -1 & 1 \\ 1 & 0 \end{pmatrix}
	= \begin{pmatrix} -c & c \\ b-c & c \end{pmatrix},
	\]
which is $\sqrt{J}$-fine by Corollary \ref{Block}. Hence, \( A \) is $\sqrt{J}$-fine with Proposition \ref{2}(2) at hand.
	
The remaining subcases (\( a,c \not\in J(R) \), or \( b,d \not\in J(R) \), or \( c,d \not\in J(R) \)) are rather analogous, and thus we omit their verifications.

\medskip
	
\textbf{Case 4: Exactly one entry is not in \( J(R) \).}

\medskip
	
If \( a \not\in J(R) \), then \( A \) is $\sqrt{J}$-fine if and only if \( \begin{pmatrix} a & 0 \\ 0 & 0 \end{pmatrix} \) is $\sqrt{J}$-fine. But,
	\[
	\begin{pmatrix} 1 & 0 \\ 1 & 1 \end{pmatrix} \begin{pmatrix} a & 0 \\ 0 & 0 \end{pmatrix} \begin{pmatrix} 1 & 0 \\ -1 & 1 \end{pmatrix}
	= \begin{pmatrix} a & 0 \\ a & 0 \end{pmatrix},
	\]
which is $\sqrt{J}$-fine with the help of Case 2. So, \( A \) is $\sqrt{J}$-fine.
	
If \( b \not\in J(R) \), then \( b \) is $\sqrt{J}$-fine, so write \( b = u + x \) with \( u \in U(R) \) and \( x \in \sqrt{J(R)} \). Then, \( A \) is $\sqrt{J}$-fine if and only if \( \begin{pmatrix} 0 & b \\ 0 & 0 \end{pmatrix} \) is $\sqrt{J}$-fine. But,
	\[
	\begin{pmatrix} 0 & b \\ 0 & 0 \end{pmatrix}
	= \begin{pmatrix} 0 & u \\ -1 & 0 \end{pmatrix} + \begin{pmatrix} 0 & x \\ 1 & 0 \end{pmatrix},
	\]
is a sum of a unit and an element from $\sqrt{J}$. So, \( A \) is $\sqrt{J}$-fine.
	
Finally, in all cases, \( A \) is $\sqrt{J}$-fine, and consequently \( M_2(R) \) is generalized $\sqrt{J}$-fine.
\end{proof}

The complete set of \( n \times n \) matrix units is designed by \(\{E_{ij} : 1 \leq i, j \leq n\}\). For any \(i, j\) with \(1 \leq i, j \leq n\) and any \(a \in R\), we define
\[
P_{ij} := I - E_{ii} - E_{jj} + E_{ij} + E_{ji}
\]
and
\[
T_{ij}(a) := I + aE_{ij}.
\]
Note that \(P_{ij}^2 = I\) and \(T_{ij}(a)T_{ij}(-a) = I\) in \(\mathbb{M}_n(R)\) (see \cite{z}).

\medskip

We are now prepared to prove the following third major achievement, as noticed above.

\begin{theorem}\label{22}
If \(R\) is a generalized \(\sqrt{J}\)-fine ring, then \(\mathbb{M}_n(R)\) is also generalized \(\sqrt{J}\)-fine for every positive integer \(n\).
\end{theorem}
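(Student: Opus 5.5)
We argue by induction on \(n\), following the strategy Zhou used for generalized fine rings in \cite{z}. The tools are Corollary \ref{Block}, Proposition \ref{2}(2) (conjugation by units preserves \(\sqrt{J}F\)), Lemma \ref{2.4}, and the observation that \(\sqrt{J}F\) is stable under adding elements of \(J\). This last fact holds because \(U+J=U\), or alternatively because \(\sqrt{J}+J\subseteq\sqrt{J}\) by \cite[Lemma 2.11(1)]{DDE}. The case \(n=1\) is the hypothesis and \(n=2\) is Lemma \ref{2.4}.

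For \(n\ge 3\), assume \(M_k(R)\) is generalized \(\sqrt{J}\)-fine for all \(k<n\). Take \(A=(a_{ij})\in M_n(R)\setminus M_n(J(R))\). Since \(J(M_n(R))=M_n(J(R))\), we may reduce each entry modulo \(J(R)\). This lets us treat every entry as either lying in \(J(R)\) (and then replace it by \(0\)) or lying outside \(J(R)\). Some entry \(a_{ij}\notin J(R)\).

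Using the permutation matrices \(P_{ij}\), which are units with \(P_{ij}^2=I\), we conjugate so that the nonradical entry is moved into the first row. If it lands on the diagonal, say \(a_{11}\notin J(R)\), we partition \(A\) as
\(\begin{pmatrix} X & C\\ D & Y\end{pmatrix}\) with \(X\in M_1(R)\) and \(Y\in M_{n-1}(R)\). Then \(X=a_{11}\) is \(\sqrt{J}\)-fine. If \(Y\notin M_{n-1}(J(R))\), the induction hypothesis makes \(Y\) \(\sqrt{J}\)-fine, and Proposition \ref{block} finishes. If \(Y\in M_{n-1}(J(R))\), we instead partition with \(X\) of size \(2\) and \(Y\) of size \(n-2\). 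The \(2\times 2\) block \(X\) lies outside \(M_2(J(R))\), so it is \(\sqrt{J}\)-fine by Lemma \ref{2.4}. The block \(Y\) lies in \(M_{n-2}(J(R))\), and since \(n-2\ge1\) it can be replaced by \(0\) modulo \(J\); we must then handle an entire diagonal block that is zero. To do this we use the standard trick already seen in Lemma \ref{2.4}, Case 4. We write the zero block as \(U'+(-U')\), where \(U'\) is a unit, and absorb \(-U'\) into the nilpotent part. More precisely, we choose a splitting in which the lower-right block is \(\sqrt{J}\)-fine after a conjugation \(T_{ij}(1)\) copies a nonradical entry into it. This is done exactly as in the \(2\times2\) computation \(\begin{pmatrix}1&0\\1&1\end{pmatrix}\begin{pmatrix}a&0\\0&0\end{pmatrix}\begin{pmatrix}1&0\\-1&1\end{pmatrix}\), which produces a nonradical entry in a new row. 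Concretely, conjugating by \(T_{i1}(1)\) adds row \(1\) to row \(i\) and subtracts column \(i\) from column \(1\). This places \(a_{11}\) (up to entries already in the block) in position \((i,1)\) and alters the \((i,i)\) entry, and we choose it so that the lower block becomes nonradical.

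If instead every diagonal entry is in \(J(R)\) but some off-diagonal \(a_{1j}\notin J(R)\), we conjugate by \(T_{j1}(1)\) or \(T_{1j}(1)\). This adds \(\pm a_{1j}\) plus radical terms to a diagonal position, which reduces to the previous case. The \(2\times 2\) analysis in Lemma \ref{2.4} (Case 2, matrices \(A_1,A_2\)) is the model for handling the possibility that the new diagonal entry accidentally falls in \(J(R)\).

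The main obstacle is exactly this bookkeeping. After conjugation we need a block decomposition in which \emph{both} diagonal blocks are outside the radical, or else are directly \(\sqrt{J}\)-fine. The simplest uniform route I would try first is a different partition: take \(X\) to be the \(2\times2\) block containing rows and columns \(\{1,j\}\) (after a permutation), and \(Y\) the complementary \((n-2)\times(n-2)\) block. If \(Y\) is not radical, we use induction together with Lemma \ref{2.4} and Proposition \ref{block}. If \(Y\) is radical, we conjugate by a suitable \(T_{k1}(1)\) with \(k\) an index of \(Y\), which injects a nonradical entry into \(Y\) while keeping \(X\) nonradical, and then conclude by Corollary \ref{Block}.
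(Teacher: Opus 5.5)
\textbf{Gap in the one-nonradical-diagonal case.} Your easy cases are sound. If $a_{11}\notin J(R)$ and the complementary block $Y$ on rows and columns $2,\dots,n$ is not in $M_{n-1}(J(R))$, the induction hypothesis and Proposition~\ref{block} finish. If all diagonal entries are radical, conjugating by $T_{j1}(1)$ makes the $(1,1)$ and $(j,j)$ entries congruent to $-a_{1j}$ and $a_{1j}$ modulo $J(R)$. So neither can fall into $J(R)$, and you are back in the first situation.

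The gap is the remaining case, which is the real content of the induction step: exactly one diagonal entry, $a_{11}$, is nonradical and $Y\equiv 0$. The mechanism you propose there does not work. One has exactly
\[
T_{i1}(1)\,A\,T_{i1}(-1)=A+E_{i1}A-AE_{i1}-a_{1i}E_{i1}.
\]
So inside the lower block the only change is that the tail $(a_{12},\dots,a_{1n})$ of row $1$ is added to row $i$. The copy of $a_{11}$ lands in position $(i,1)$, which lies in the off-diagonal block, and the new $(i,i)$ entry is just $a_{1i}$. Hence, if $a_{12},\dots,a_{1n}\in J(R)$ (e.g.\ $A\equiv aE_{11}$), no choice of $i$ makes the lower block nonradical.

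The same computation defeats your ``uniform route''. Take $n=3$, $A=aE_{12}$ and $X$ on $\{1,2\}$: conjugation by $T_{31}(1)$ yields $aE_{12}+aE_{32}$, and $Y$ is still $0$. Moreover, even when some $a_{1j}\notin J(R)$, the conjugation replaces $a_{11}$ by $a_{11}-a_{1i}$, which may lie in $J(R)$. Then the $1\times 1$ block is no longer $\sqrt{J}$-fine, and you leave exactly this bookkeeping open.

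\textbf{What is missing.} You need a two-step conjugation that keeps track of the $(1,1)$ entry; the paper's proof supplies it.

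First suppose some $a_{1k}\notin J(R)$ with $k>1$. Move it to position $(1,2)$ by $P_{2k}$ and conjugate by $\begin{pmatrix}1&1\\1&0\end{pmatrix}\oplus I_{n-2}$, which puts $a_{12}$ at $(1,1)$. Unless the new lower block is nonradical (when Proposition~\ref{block} applies), this forces $a_{11}-a_{12},a_{13},\dots,a_{1n},a_{31},\dots,a_{n1}\in J(R)$. So $A$ may be taken with first row $(a_{11},a_{11},0,\dots,0)$, second row $(a_{21},0,\dots,0)$ and zeros elsewhere. Now $T_{31}(1)AT_{31}(-1)$ keeps $a_{11}$ at $(1,1)$ (as $a_{13}=0$) while placing $a_{11}$ at $(3,2)$, inside the lower block. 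Induction and Proposition~\ref{block} then apply.

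A nonradical entry in column $1$ is handled symmetrically. If all $a_{1i},a_{i1}$ with $i\ge 2$ are radical, then $A\equiv a_{11}E_{11}$, and $T_{12}(-1)AT_{12}(1)\equiv a_{11}(E_{11}+E_{12})$ reduces to the previous subcase. Without an argument of this kind, your induction step does not cover matrices such as $aE_{11}$.
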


\begin{proof}
We induct on $n$. The cases $n=1,2$ are immediate taking into account Lemma \ref{2.4}), so we assume that $n\ge 3$ and that the claim is true for these smaller sizes. Choose $A=(a_{ij})\in \mathbb{M}_n(R)\setminus \mathbb{M}_n(J(R))$ and use the elementary matrices $P_{ij}$ and $T_{ij}(x)$ same as in \cite[Theorem 2.5]{z} noting that the conjugation by them preserves $\sqrt{J}$-fineness.
	
\medskip

\noindent\textbf{Case 1.} $a_{ii},a_{jj}\notin J(R)$ for some $i<j$.

\medskip
	
Write $A=\begin{pmatrix}A_{11}&A_{12}\\A_{21}&A_{22}\end{pmatrix}$. By the induction hypothesis, $A_{11}$ and $A_{22}$ are $\sqrt{J}$-fine. Hence, $A$ is $\sqrt{J}$-fine bearing in mind Corollary \ref{Block}.
	
\medskip

\noindent\textbf{Case 2.} $a_{ii}\in J(R)$ for all $i$.

\medskip
	
We may assume $a_{ii}=0$. Since $A\not\in \mathbb{M}_n(J(R))$, some off-diagonal entry lies outside $J(R)$; thus, conjugating by $P$'s as in \cite[Theorem 2.5]{z}, we reduce to $a_{12}\not\in J(R)$. Then, one obtains that
	\[
	T_{21}(1)AT_{21}(-1)=
	\begin{pmatrix}
		\begin{pmatrix}-a_{12}&a_{12}\\a_{21}-a_{12}&a_{12}\end{pmatrix} & \ast\\
		\ast & \ast
	\end{pmatrix},
	\]
whose leading block has both diagonal entries outside $J(R)$. Therefore, it is $\sqrt{J}$-fine by Case 1, and hence so is $A$.
	
\medskip

\noindent\textbf{Case 3.} Exactly one diagonal entry, say $a_{kk}$, is outside $J(R)$.

\medskip
	
Conjugating by $P_{1k}$, assume $a_{11}\not\in J(R)$. A consultation with Corollary \ref{Block} allows us to suppose $a_{ij}\in J(R)$ for all $i,j\ge 2$, and further $a_{ij}=0$ for $i,j\ge 2$.
	
If, for a moment, $a_{1k}\not\in J(R)$ for some $k>1$, conjugate by $P_{2k}$ to get $a_{12}\not\in J(R)$. For the matrix $P:=\begin{pmatrix}\begin{pmatrix}1&1\\1&0\end{pmatrix}&0\\0&I_{n-2}\end{pmatrix}$, we calculate
	\[
	PAP^{-1}=
	\begin{pmatrix}
		a_{12} & \begin{pmatrix}a_{11}+a_{21}-a_{12}&a_{13}&\cdots&a_{1n}\end{pmatrix}\\
		\begin{pmatrix}a_{12}\\0\\ \vdots\\0\end{pmatrix} &
		\begin{pmatrix}
			a_{11}-a_{12}&a_{13}&\cdots&a_{1n}\\
			a_{31}&0&\cdots&0\\
			\vdots&\vdots&\ddots&\vdots\\
			a_{n1}&0&\cdots&0
		\end{pmatrix}
	\end{pmatrix}.
	\]
Since $a_{12}$ is $\sqrt{J}$-fine, Corollary \ref{Block} allows us to assume the lower-right block is in $\mathbb{M}_{n-1}(J(R))$. Thus,
	\[
	A=
	\begin{pmatrix}
		a_{11}&a_{11}&0&\cdots&0\\
		a_{21}&0&0&\cdots&0\\
		0&0&0&\cdots&0\\
		\vdots&\vdots&\vdots&\ddots&\vdots\\
		0&0&0&\cdots&0
	\end{pmatrix}.
	\]
Then, $T_{31}(1)AT_{31}(-1)$ is $\sqrt{J}$-fine acting with Corollary \ref{Block}; hence, $A$ is $\sqrt{J}$-fine. Note that the case $a_{k1}\not\in J(R)$ is symmetric.
	
If, however, $a_{1i},a_{i1}\in J(R)$ for all $i\ge 2$, then $A-\begin{pmatrix}a_{11}&0\\0&0\end{pmatrix}\in J(\mathbb{M}_n(R))$, so we may write $A=\begin{pmatrix}a_{11}&0\\0&0\end{pmatrix}$. Thus,
	\[
	T_{12}(-1)AT_{12}(1)=
	\begin{pmatrix}
		a_{11}&a_{11}&0&\cdots&0\\
		0&0&0&\cdots&0\\
		\vdots&\vdots&\vdots&\ddots&\vdots\\
		0&0&0&\cdots&0
	\end{pmatrix},
	\]
which is $\sqrt{J}$-fine by the previous paragraph. That is why, $A$ is $\sqrt{J}$-fine.
	
Thereby, all cases yield $A\in\sqrt{J}F(\mathbb{M}_n(R))$. Therefore, $\mathbb{M}_n(R)$ is a generalized $\sqrt{J}$-fine ring, completing the induction and so the entire argumentation.
\end{proof}

\begin{remark}
In general, as already noticed above, triangular matrix rings are \textit{not} necessarily generalized \(\sqrt{J}\)-fine. For instance, a direct check shows that \(T_2(\mathbb{Z}_2)\) clearly does \textit{not} satisfy this property.
\end{remark}

We are now in a position to prove the following.

\begin{lemma}\label{2.8}
Let \( R \) be a generalized $\sqrt{J}$-fine ring. Then, \( R/J(R) \) is $\sqrt{J}$-fine and \( C(R) \) is local.
\end{lemma}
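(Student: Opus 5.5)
The plan has two parts: first show that $\bar R := R/J(R)$ is $\sqrt{J}$-fine, then deduce that $C(R)$ is local.

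\textbf{Step 1: $\bar R$ is $\sqrt{J}$-fine.} By Lemma~\ref{2.10}, applied with $I = J(R)$, the ring $\bar R$ is generalized $\sqrt{J}$-fine. Since $J(\bar R) = 0$, every non-zero element of $\bar R$ lies outside $J(\bar R)$ and is therefore $\sqrt{J}$-fine. Hence $\bar R$ is $\sqrt{J}$-fine, provided $R \neq 0$, which we assume throughout. Concretely, given $\bar r \neq 0$, we have $r \notin J(R)$, so $r = u + a$ with $u \in U(R)$ and $a \in \sqrt{J(R)}$. Reducing modulo $J(R)$ gives $\bar r = \bar u + \bar a$. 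Here $\bar u$ is a unit, and $\bar a^n = 0$ for some $n$, so $\bar a \in Nil(\bar R) \subseteq \sqrt{J(\bar R)}$.

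\textbf{Step 2: $C(R)$ is local.} It suffices to show that every $c \in C(R)$ is either a unit of $C(R)$ or lies in $J(C(R))$. A clean way to organize this is to show that the non-units of $C(R)$ form an ideal.

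Take $c \in C(R) \setminus J(R)$. Then $c = u + a$ is a $\sqrt{J}$-fine decomposition in $R$. Since $c$ is central, Proposition~\ref{2}(1) gives $c \in U(C(R))$. Note that Proposition~\ref{2}(1) only needs $c$ central, not $u$: the proof writes $c = u(1 + u^{-1}a)$, and $u$ commutes with $a = c - u$ because $c$ is central. Thus $u^{-1}a \in \sqrt{J(R)}$ by \cite[Lemma 2.1(1)]{DDE}, and $1 + u^{-1}a$ is a unit by \cite[Lemma 2.1(3)]{DDE}. Also, the inverse of a central unit is central.

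Therefore every element of $C(R)$ is either a unit of $C(R)$ or lies in $C(R) \cap J(R)$. The set $C(R) \cap J(R)$ is an ideal of $C(R)$. For $x \in C(R) \cap J(R)$ and any $y \in C(R)$, the element $1 - xy$ is a unit of $R$, and its inverse is central. So $1 - xy \in U(C(R))$, which gives $C(R) \cap J(R) \subseteq J(C(R))$.

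Hence the non-units of $C(R)$ are exactly $C(R) \cap J(R)$, a proper ideal of $C(R)$. A commutative ring whose non-units form an ideal is local, so $C(R)$ is local.

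\textbf{Main obstacle.} The only delicate point is Step 2: we must be sure that the decomposition in Proposition~\ref{2}(1) gives invertibility inside $C(R)$ and not merely inside $R$. This is settled by the observation that the inverse of a central unit is central. Everything else is routine.
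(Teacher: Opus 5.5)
Your proof is correct and follows essentially the same route as the paper: you reduce a $\sqrt{J}$-fine decomposition modulo $J(R)$ for the first claim, and for the second you use that a central $c=u+a$ forces $u$ and $a$ to commute, so $c=u(1+u^{-1}a)$ is a unit of $R$ and hence of $C(R)$. The only difference is organizational. The paper starts from $r\in C(R)\setminus J(C(R))$ and deduces $r\notin J(R)$, whereas you start from $C(R)\setminus J(R)$ and then show that $C(R)\cap J(R)$ is exactly the ideal of non-units; as a by-product this also gives $J(C(R))=C(R)\cap J(R)$.
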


\begin{proof}
Take any non-zero \( \bar{r} \in \bar{R} = R/J(R) \). Then, \( r \not\in J(R) \), so the element \( r \) is generalized \( \sqrt{J} \)-fine in \( R \). Hence, \( r = u + a \) for some \( u \in U(R) \) and \( a \in \sqrt{J(R)} \). Evidently, \( \bar{u} \in U(\bar{R}) \), and via \cite[Lemma 2.1(5)]{DDE}, we have \( \bar{a} \in \sqrt{J(\bar{R})} \). Therefore, \( \bar{r} \) is generalized \( \sqrt{J} \)-fine in \( \bar{R} \).
	
Now, let \( C = C(R) \). An element of \( C \) is a unit in \( C \) if and only if it is a unit in \( R \). Choose \( r \in C \setminus J(C) \). Then, there is \( x \in C \) such that \( 1 + rx \) is not a unit in \( C \), hence not a unit in \( R \). Thus, \( r \not\in J(R) \). Since \( R \) is generalized \( \sqrt{J} \)-fine, we can write \( r = u + a \) with \( u \in U(R) \) and \( a \in \sqrt{J(R)} \). Then, \( r = u(1 + u^{-1}a) \). Because \( r \) is central, we get \( ua = au \), and so \[ 1 + u^{-1}a \in 1 + \sqrt{J(R)} \subseteq U(R) \] knowing \cite[Lemma 2.1(1)]{DDE}. Hence, \( r \) is a unit in \( R \), and therefore a unit in \( C \). Thus, \( C \) is local.
\end{proof}

We now directly extract the following consequence.

\begin{corollary}\label{2.9}
Let \( R \) be a commutative ring and \( n \ge 1 \). Then, \( M_n(R) \) is a generalized $\sqrt{J}$-fine ring if and only if \( R \) is local. In particular, a commutative ring is generalized $\sqrt{J}$-fine if and only if it is local.
\end{corollary}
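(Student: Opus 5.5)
The statement is proved in two directions, with the ``in particular'' clause being the case $n=1$.

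For ($\Leftarrow$), assume $R$ is commutative and local. Then $R$ is generalized $\sqrt{J}$-fine: every $r\in R\setminus J(R)$ is a unit, so $r=r+0$ is a $\sqrt{J}$-fine decomposition (equivalently, invoke Theorem~\ref{2.33}(1)). Theorem~\ref{22} then gives at once that $M_n(R)$ is generalized $\sqrt{J}$-fine for every $n\ge1$.

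For ($\Rightarrow$), assume $M_n(R)$ is generalized $\sqrt{J}$-fine. Lemma~\ref{2.8} tells us that the center $C(M_n(R))$ is local. The point to record is that the center of $M_n(R)$ for commutative $R$ is exactly the scalar matrices $\{rI_n : r\in R\}$. This is the standard computation: commuting with every matrix unit $E_{ij}$ forces a matrix to be diagonal with equal diagonal entries, and commutativity of $R$ makes every scalar matrix central. Hence $C(M_n(R))\cong R$ as rings, so $R$ is local. Taking $n=1$ gives the final sentence, since then $C(R)=R$ and Lemma~\ref{2.8} directly says $R$ is local.

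I expect no real obstacle, since the heavy lifting is already done by Theorem~\ref{22} and Lemma~\ref{2.8}. The only step requiring care is the identification $C(M_n(R))\cong R$. I would state it explicitly: if $X=(x_{ij})$ is central, then $E_{ij}X=XE_{ij}$ compares row $j$ of $X$ placed in row $i$ with column $i$ of $X$ placed in column $j$. This gives $x_{ki}=0$ for $k\neq i$ and $x_{ii}=x_{jj}$.
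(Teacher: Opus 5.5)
Your proposal is correct and follows the same route as the paper: Theorem~\ref{22} for the ``if'' direction, and Lemma~\ref{2.8} combined with $C(M_n(R))\cong R$ for the ``only if'' direction. You also spell out two points the paper leaves implicit, namely that a local ring is trivially generalized $\sqrt{J}$-fine and the matrix-unit computation of the center.
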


\begin{proof}
The ``if'' part follows from Theorem \ref{22}. For the ``only if'' part, note that \( C(M_n(R)) \cong R \), and so the claim follows immediately from Lemma \ref{2.8}.
\end{proof}

Mimicking the traditional terminology, we say that a group \( G \) is a \textit{\( p \)-group} if the order of every element of \( G \) is a power of the prime number \( p \). Moreover, a group \( G \) is said to be \textit{locally finite} if every finitely generated subgroup is finite.

Suppose now that \( G \) is an arbitrary group and \( R \) is an arbitrary ring. As usual, the symbol \( RG \) stands for the group ring of \( G \) over \( R \). The homomorphism \( \varepsilon : RG \to R \), defined by
\[\varepsilon \left( \sum_{g \in G} a_g g \right) = \sum_{g \in G} a_g,\]
is called the augmentation map of \( RG \), and its kernel, denoted by \( \Delta(RG) \), is called the \textit{augmentation ideal} of \( RG \).

\medskip

Our final result is the following criterion.

\begin{theorem}\label{111}
Let \(R\) be a ring and \(G\) a group. Then, the following two conditions hold:

(1) If \(RG\) is a generalized \(\sqrt{J}\)-fine ring, then \(R\) is a generalized \(\sqrt{J}\)-fine ring and \(G\) is a \(p\)-group with \(p \in J(R)\).

(2) The converse of (1) holds if \(G\) is a locally finite group.
\end{theorem}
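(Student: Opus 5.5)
For (1), I would first pass from $RG$ to $R$ via the augmentation map. Since $\varepsilon: RG\to R$ is a surjective ring homomorphism, I would try to show directly that the image of a $\sqrt{J}$-fine decomposition stays $\sqrt{J}$-fine. Take $r\in R\setminus J(R)$. I expect $r\cdot 1\notin J(RG)$: this should follow from $J(RG)\cap R\subseteq J(R)$, which holds because $R$ is a retract of $RG$ via $\varepsilon$. Then write $r=u+a$ with $u\in U(RG)$ and $a\in\sqrt{J(RG)}$, and apply $\varepsilon$. Clearly $\varepsilon(u)\in U(R)$. For $\varepsilon(a)$ I need $\varepsilon(J(RG))\subseteq J(R)$. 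This holds because $\varepsilon$ is surjective and the image of a quasi-regular ideal is quasi-regular (and an ideal). So $\varepsilon(a)^n\in J(R)$ and $R$ is generalized $\sqrt{J}$-fine.

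Next, for the group, I would use Lemma \ref{2.8}: $R$, being generalized $\sqrt{J}$-fine, has local center, hence is not a nontrivial product. For $g\in G$, I consider $1-g\in\Delta(RG)$. The expected argument mimics Zhou's generalized-fine case. If $1-g\notin J(RG)$, write $1-g=u+a$ and apply $\varepsilon$; a cleaner route, though, is to show that $\Delta(RG)\subseteq J(RG)$ is forced. To see this, note that $RG/J(RG)$ is $\sqrt{J}$-fine by Lemma \ref{2.8}, hence $2$-good-like by Theorem \ref{2-good}. Then use the following: for $g$ of finite order $n$, the element $e=\frac1n\sum g^i$ (when $n$ is invertible) would be a nontrivial idempotent, whose image modulo $J$ I would contradict using Proposition \ref{2.25}-type reasoning on central idempotents of $RG$. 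For $g$ of infinite order, $1-g$ is neither a unit nor in $J$ in $R\langle g\rangle$. In that case I would use Lemma \ref{2.8}: the center of $RG$ contains $C(R)\langle g\rangle$-type elements when $g$ is central, so I would restrict to the cyclic subgroup. Here is the step I expect to be hardest: passing from the subgroup $\langle g\rangle$ to $G$ without commutativity. My first attempt would be the standard fact that $\Delta(RG)\subseteq J(RG)$ iff $G$ is a $p$-group with $p\in J(R)$, for $G$ locally finite. I would aim to establish $\Delta(RG)\subseteq J(RG)$ by showing that each $1-g$ lies in $\sqrt{J(RG)}$ and using the quotient $RG/J(RG)$, which is $\sqrt{J}$-fine with local-center constraints.

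For (2), assume $R$ is generalized $\sqrt{J}$-fine, $G$ is a locally finite $p$-group, and $p\in J(R)$. The known result (Connell/Zhou) gives $\Delta(RG)\subseteq J(RG)$, and then $RG/\Delta(RG)\cong R$. Hence Lemma \ref{2.10} with $I=\Delta(RG)$ yields that $RG$ is generalized $\sqrt{J}$-fine immediately. This direction is routine once that citation is in place.
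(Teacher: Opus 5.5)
Your proof that $R$ is generalized $\sqrt{J}$-fine matches the paper's, and so does your proof of (2): cite $\Delta(RG)\subseteq J(RG)$ for a locally finite $p$-group with $p\in J(R)$, then apply Lemma \ref{2.10} to $RG/\Delta(RG)\cong R$.

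The genuine gap is the second half of (1). Nothing in your sketch actually proves $\Delta(RG)\subseteq J(RG)$, and you give no other route to ``$G$ is a $p$-group with $p\in J(R)$''. Each of the routes you outline stalls:
\begin{itemize}
\item The idempotent $e=\frac{1}{n}\sum g^i$ is central in $RG$ only when $\langle g\rangle$ is normal. So the Proposition \ref{2.25} argument does not apply in general; non-central idempotents are harmless in generalized $\sqrt{J}$-fine rings, e.g.\ in $M_2$ of a field.
\item The infinite-order case and the passage from $\langle g\rangle$ to $G$ are explicitly left open.
\item Showing $1-g\in\sqrt{J(RG)}$ is the wrong target, since it does not give $1-g\in J(RG)$.
\end{itemize}
Also, the fact to quote here is Connell's Proposition 15(i): $\Delta(RG)\subseteq J(RG)$ forces $G$ to be a $p$-group with $p\in J(R)$, for an \emph{arbitrary} group $G$. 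Local finiteness is needed only for the converse, which is (2).

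The missing step is the one you wrote down and then abandoned, and it takes one line. Suppose $f\in\Delta(RG)\setminus J(RG)$, and write $f=u+t$ with $u\in U(RG)$ and $t\in\sqrt{J(RG)}$. Applying $\varepsilon$ gives $0=\varepsilon(u)+\varepsilon(t)$. By exactly what you proved in the first half, $\varepsilon(u)\in U(R)$ and $\varepsilon(t)\in\sqrt{J(R)}$, because $\varepsilon(J(RG))\subseteq J(R)$. So $\varepsilon(u)=-\varepsilon(t)$ is a unit of $R$ lying in $\sqrt{J(R)}$. That is impossible: if $x\in U(R)$ and $x^n\in J(R)$, then $J(R)$ contains the unit $x^n$. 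Hence $\Delta(RG)\subseteq J(RG)$, and Connell's result completes (1). You do not need Lemma \ref{2.8}, idempotents, or orders of elements.
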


\begin{proof}
(1) First, we show that \(R\) is a generalized \(\sqrt{J}\)-fine ring. In fact, let \(a \not\in J(R)\). Then, \(a \not\in J(RG)\) since \(J(RG) \cap R \subseteq J(R)\). Hence, there exist \(u \in U(RG)\) and \(t \in \sqrt{J(RG)}\) such that \(a = u + t\). Thus, we have
\[
a = \varepsilon(a) = \varepsilon(u) + \varepsilon(t) \in U(R) + \sqrt{J(R)}.
\]

Now, we establish that \(G\) is a \(p\)-group with \(p \in J(R)\). Indeed, working with \cite[Proposition 15(i)]{con}, it suffices to show that \(\Delta(RG) \subseteq J(RG)\). In this vein, suppose \(f \in \Delta(RG) \setminus J(RG)\). Since \(RG\) is a generalized \(\sqrt{J}\)-fine ring, there exist \(u \in U(RG)\) and \(t \in \sqrt{J(RG)}\) such that \(f = u + t\). Hence, it must be that
\[
0 = \varepsilon(f) = \varepsilon(u) + \varepsilon(t),
\]
so that \[\varepsilon(u) = -\varepsilon(t) \in U(RG) \cap \sqrt{J(RG)},\] which is a contradiction. Therefore, \(\Delta(RG) \subseteq J(RG)\).

(2) Assume that \(R\) is a generalized \(\sqrt{J}\)-fine ring and \(G\) is a locally finite \(p\)-group with \(p \in J(R)\). Since \(RG/\Delta(RG) \cong R\), and \(R\) is a generalized \(\sqrt{J}\)-fine ring, we conclude that \(RG/\Delta(RG)\) is also generalized \(\sqrt{J}\)-fine. 

On the other side, \cite[Lemma 2]{zhou} informs us that \(\Delta(RG) \subseteq J(RG)\). Hence, Lemma \ref{2.10} tells us that \(RG\) is a generalized \(\sqrt{J}\)-fine ring, as promised.
\end{proof}

\medskip
\medskip

\noindent{\bf Funding:} This scientific work is mainly based upon research funded by Iran National Science Foundation (INSF) under Project no. 40502582.

\medskip


\medskip

\section*{Data availability}

\textbf{No} data was used for the research described in the article.

\medskip

\section*{Declarations}

The authors declare \textbf{no} any conflict of interests while writing and preparing this manuscript.

\end{document}